\documentclass[
11pt,  reqno]{amsart}
\usepackage[margin=1.2in,marginparwidth=1.5cm, marginparsep=0.5cm]{geometry}

\usepackage{amsmath,amssymb,amscd,amsthm,amsxtra, esint}

\usepackage{booktabs} 
\usepackage{microtype}
\usepackage{amssymb}
\usepackage{mathrsfs}

\usepackage{upgreek}

\usepackage{color}
\usepackage[implicit=true]{hyperref}

\usepackage{xsavebox}

\usepackage{cases}

\allowdisplaybreaks[2]

\definecolor{gr}{rgb}   {0.,   0.69,   0.23 }
\definecolor{bl}{rgb}   {0.,   0.5,   1. }
\definecolor{mg}{rgb}   {0.85,  0.,    0.85}
\definecolor{yl}{rgb}   {0.8,  0.7,   0.}
\definecolor{or}{rgb}  {0.7,0.2,0.2}

\usepackage{tikz}
\usepackage{marginnote}
\usepackage{scalerel} 

\usetikzlibrary{shapes.misc}
\usetikzlibrary{shapes.symbols}
\usetikzlibrary{decorations}
\usetikzlibrary{decorations.markings}

\tikzset{
	dot/.style={circle,fill=black,draw=black,inner sep=0pt,minimum size=0.5mm},
	>=stealth,
	}

\tikzset{
	dot2/.style={circle,fill=black,draw=black,inner sep=0pt,minimum size=0.2mm},
	>=stealth,
	}

\tikzset{
	ddot/.style={circle,fill=black,draw=black,inner sep=0pt,minimum size=0.8mm},
	>=stealth,
	}

\tikzset{decision/.style={ 
        draw,
        diamond,
        aspect=1.5
    }}

\tikzset{dia2/.style
={diamond,fill=white,draw=black,inner sep=0pt,minimum size=1mm},
	>=stealth,
	}

\tikzset{dia/.style
={star,fill=black,draw=black,inner sep=0pt,minimum size=1mm},
	>=stealth,
	}

\tikzset{dia/.style
={diamond,fill=black,draw=black,inner sep=0pt,minimum size=1.3mm},
	>=stealth,
	}

\makeatletter
\def\DeclareSymbol#1#2#3{\xsavebox{#1}{\tikz[baseline=#2,scale=0.15]{#3}}}
\def\<#1>{\xusebox{#1}}
\makeatother

\newsavebox{\peA}
\newsavebox{\pneA}
\newsavebox{\plA}
\newsavebox{\pgA}
\newsavebox{\pleA}
\newsavebox{\pgeA}
\newsavebox{\pezA}

\savebox{\peA}{\tikz \draw (0,0) node[shape=circle,draw,inner sep=0pt,minimum size=8.5pt] {\scriptsize  $=$};}
\savebox{\pneA}{\tikz \draw (0,0) node[shape=circle,draw,inner sep=0pt,minimum size=8.5pt] {\footnotesize $\neq$};}
\savebox{\plA}{\tikz \draw (0,0) node[shape=circle,draw,inner sep=0pt,minimum size=8.5pt] {\scriptsize $<$};}
\savebox{\pgA}{\tikz \draw (0,0) node[shape=circle,draw,inner sep=0pt,minimum size=8.5pt] {\scriptsize $>$};}
\savebox{\pleA}{\tikz \draw (0,0) node[shape=circle,draw,inner sep=0pt,minimum size=8.5pt] {\scriptsize $\leqslant$};}
\savebox{\pgeA}{\tikz \draw (0,0) node[shape=circle,draw,inner sep=0pt,minimum size=8.5pt] {\scriptsize $\geqslant$};}
\savebox{\pezA}{\tikz \draw (0,0) node[shape=circle,draw,
fill=white, 
inner sep=0pt,minimum size=8.5pt]{} ;}

\def \peB{\mathchoice
{\scalebox{.7}{{\usebox{\peA}}}}
{\scalebox{.7}{{\usebox{\peA}}}}
{\scalebox{.7}{{\usebox{\peA}}}}
{}
}

\def \pezB{\mathchoice
{\scalebox{.7}{{\usebox{\pezA}}}}
{\scalebox{.7}{{\usebox{\pezA}}}}
{\scalebox{.7}{{\usebox{\pezA}}}}
{}
}

\newcommand{\pe}{\mathbin{{\peB}}}

\newcommand{\pez}{\mathbin{{\pezB}}}

\usetikzlibrary{decorations.pathmorphing, patterns,shapes}

\DeclareSymbol{dot}{-2.7}{\draw (0,0) node[dot]{};}

\DeclareSymbol{1}{0}{\draw[white] (-.4,0) -- (.4,0); \draw (0,0)  -- (0,1.2) node[dot] {};}
\DeclareSymbol{2}{0}{\draw (-0.5,1.2) node[dot] {} -- (0,0) -- (0.5,1.2) node[dot] {};}
\DeclareSymbol{3}{0}{\draw (0,0) -- (0,1.2) node[dot] {}; \draw (-.7,1) node[dot] {} -- (0,0) -- (.7,1) node[dot] {};}
\DeclareSymbol{30}{-3}{\draw (0,0) -- (0,-1); \draw (0,0) -- (0,1.2) node[dot] {}; \draw (-.7,1) node[dot] {} -- (0,0) -- (.7,1) node[dot] {};}
\DeclareSymbol{31}{-3}{\draw (0,0) -- (0,-1) -- (1,0) node[dot] {}; \draw (0,0) -- (0,1.2) node[dot] {}; \draw (-.7,1) node[dot] {} -- (0,0) -- (.7,1) node[dot] {};}
\DeclareSymbol{32}{-3}{\draw (0,0) -- (0,-1) -- (1,0) node[dot] {}; \draw (0,0) -- (0,-1) -- (-1,0) node[dot] {}; \draw (0,0) -- (0,1.2) node[dot] {}; \draw (-.7,1) node[dot] {} -- (0,0) -- (.7,1) node[dot] {};}
\DeclareSymbol{20}{-3}{\draw (0,0) -- (0,-1);\draw (-.7,1) node[dot] {} -- (0,0) -- (.7,1) node[dot] {};}
\DeclareSymbol{22}{-3}{\draw (0,0.3) -- (0,-1) -- (1,0) node[dot] {}; \draw (0,0.3) -- (0,-1) -- (-1,0) node[dot] {};\draw (-.7,1) node[dot] {} -- (0,0.3) -- (.7,1) node[dot] {};}
\DeclareSymbol{31p}{-3}{\draw (0,0) -- (0,-1) -- (1,0) node[dot] {}; \draw (0,0) -- (0,1.2) node[dot] {}; \draw (-.7,1) node[dot] {} -- (0,0) -- (.7,1) node[dot] {};
\draw (0,-1) node{\scaleobj{0.5}{\pez}};
\draw (0,-1) node{\scaleobj{0.5}{\pe}}; }
\DeclareSymbol{32p}{-3}{\draw (0,0) -- (0,-1) -- (1,0) node[dot] {}; \draw (0,0) -- (0,-1) -- (-1,0) node[dot] {}; \draw (0,0) -- (0,1.2) node[dot] {}; \draw (-.7,1) node[dot] {} -- (0,0) -- (.7,1) node[dot] {};
\draw (0,-1) node{\scaleobj{0.5}{\pez}};
\draw (0,-1) node{\scaleobj{0.5}{\pe}};}
\DeclareSymbol{22p}{-3}{\draw (0,0.3) -- (0,-1) -- (1,0) node[dot] {}; \draw (0,0.3) -- (0,-1) -- (-1,0) node[dot] {};\draw (-.7,1) node[dot] {} -- (0,0.3) -- (.7,1) node[dot] {};
\draw (0,-1) node{\scaleobj{0.5}{\pez}};
\draw (0,-1) node{\scaleobj{0.5}{\pe}};}
\DeclareSymbol{21p}{-3}{\draw (0,0.3) -- (0,-1) -- (1,0) node[dot] {};
\draw (-.7,1) node[dot] {} -- (0,0.3) -- (.7,1) node[dot] {};
\draw (0,-1) node{\scaleobj{0.5}{\pez}};
\draw (0,-1) node{\scaleobj{0.5}{\pe}};}

\DeclareSymbol{21}{-3}{\draw (0,0.3) -- (0,-1) -- (1,0) node[dot] {};
\draw (-.7,1) node[dot] {} -- (0,0.3) -- (.7,1) node[dot] {};
}

\DeclareSymbol{11p}{-3}{\draw (0,0) node[dot2] {} -- (0,-1) -- (1,0) node[dot] {};
\draw (0,0) -- (0,1.2) node[dot] {};  
\draw (0,-1) node{\scaleobj{0.5}{\pez}};
\draw (0,-1) node{\scaleobj{0.5}{\pe}}; }
\DeclareSymbol{100}{-3}
{\draw[white] (-.4,0) -- (.4,0);
\draw (0,0) node[dot2] {} -- (0,-1)  ;
\draw (0,0) -- (0,1.2) node[dot] {};}  
\usetikzlibrary{mindmap,backgrounds,trees,arrows,decorations.pathmorphing,decorations.pathreplacing,positioning,shapes.geometric,shapes.misc,decorations.markings,decorations.fractals,calc,patterns}

\tikzset{>=stealth',
         cvertex/.style={circle,draw=black,inner sep=1pt,outer sep=3pt},
         vertex/.style={circle,fill=black,inner sep=1pt,outer sep=3pt},
         star/.style={circle,fill=yellow,inner sep=0.75pt,outer sep=0.75pt},
         tvertex/.style={inner sep=1pt,font=\scriptsize},
         gap/.style={inner sep=0.5pt,fill=white}}
\tikzstyle{mybox} = [draw=black, fill=blue!10, very thick,
    rectangle, rounded corners, inner sep=10pt, inner ysep=20pt]
\tikzstyle{boxtitle} =[fill=blue!50, text=white,rectangle,rounded corners]

\tikzstyle{decision} = [diamond, draw, fill=blue!20,
    text width=4.5em, text badly centered, node distance=2.5cm, inner sep=0pt]
\tikzstyle{block} = [rectangle, draw, fill=blue!20,
    text width=2.7cm, text centered, rounded corners, minimum height=3em]

\tikzstyle{line} = [draw, very thick, color=black!50, -latex']

\tikzstyle{cloud} = [draw, ellipse,fill=red!40,
node distance=2.5cm,
    minimum height=1em]

\tikzstyle{cloud2} = [draw, ellipse,fill=red!30, text=white,text width=10em, node distance=2.5cm, text centered, minimum height=4em]

\tikzstyle{cloud3} = [draw, ellipse, fill=cyan!30,
node distance=2.5cm,
    minimum height=3em,
    text width=1.7cm]

\tikzstyle{cloud4} = [draw, ellipse,fill=orange!70, node distance=2.5cm,
   text width=2.1cm,
           minimum height=3em]

\tikzstyle{cloud5} = [draw, ellipse,fill=red!20, node distance=2.5cm,
    text centered,
    text width=3.0cm,
    minimum height=3em]

\tikzstyle{cloud6} = [draw, ellipse,fill=red!20, node distance=2.5cm,
    text width=1.5cm,
    text centered,
    minimum height=3em]

\tikzset{
    position/.style args={#1:#2 from #3}{
        at=(#3.#1), anchor=#1+180, shift=(#1:#2)
    }
}

\newtheorem{theorem}{Theorem} [section]

\newtheorem{lemma}[theorem]{Lemma}
\newtheorem{proposition}[theorem]{Proposition}

\newcommand{\noi}{\noindent}
\newcommand{\Z}{\mathbb{Z}}
\newcommand{\R}{\mathbb{R}}
\newcommand{\T}{\mathbb{T}}
\newcommand{\bul}{\bullet}

\def\norm#1{\|#1\|}

\newcommand{\al}{\alpha}
\newcommand{\be}{\beta}
\newcommand{\dl}{\delta}

\newcommand{\Dl}{\Delta}
\newcommand{\eps}{\varepsilon}

\newcommand{\G}{\Gamma}
\newcommand{\ld}{\lambda}
\newcommand{\Ld}{\Lambda}
\newcommand{\s}{\sigma}

\newcommand{\dd}{\partial}

\renewcommand{\l}{\ell}

\renewcommand{\O}{\Omega}

\renewcommand{\S}{\mathcal{S}}

\newcommand{\N}{\mathbb{N}}

\newtheorem*{ackno}{Acknowledgements}

\newcommand{\C}{\mathcal{C}}

\numberwithin{equation}{section}
\numberwithin{theorem}{section}

\newcommand{\too}{\longrightarrow}

\usepackage{comment}

\newcommand{\Id}{\mathrm{Id}}

\makeatletter
\DeclareRobustCommand\widecheck[1]{{\mathpalette\@widecheck{#1}}}
\def\@widecheck#1#2{%
   \setbox\z@\hbox{\m@th$#1#2$}%
   \setbox\tw@\hbox{\m@th$#1%
      \widehat{%
         \vrule\@width\z@\@height\ht\z@
         \vrule\@height\z@\@width\wd\z@}$}%
   \dp\tw@-\ht\z@
   \@tempdima\ht\z@ \advance\@tempdima2\ht\tw@ \divide\@tempdima\thr@@
   \setbox\tw@\hbox{%
      \raise\@tempdima\hbox{\scalebox{1}[-1]{\lower\@tempdima\box\tw@}}}%
   {\ooalign{\box\tw@ \cr \box\z@}}}
\makeatother

\makeatletter
\@namedef{subjclassname@2020}{%
  \textup{2020} Mathematics Subject Classification}
\makeatother

\begin{document}
\baselineskip = 14pt

\title[Multilinear transference on product spaces]
{Multilinear  transference on product spaces and applications}

\author[\'A. B\'enyi and  T. Oh]{\'Arp\'ad B\'enyi
and Tadahiro Oh}

\address{\'Arp\'ad B\'enyi, Department of Mathematics,
516 High St, Western Washington University, Bellingham, WA 98225,
USA.} \email{benyia@wwu.edu}

%

\address{
Tadahiro Oh, School of Mathematics\\
The University of Edinburgh\\
and The Maxwell Institute for the Mathematical Sciences\\
James Clerk Maxwell Building\\
The King's Buildings\\
Peter Guthrie Tait Road\\
Edinburgh\\
EH9 3FD\\
 United Kingdom,
 and
 School of Mathematics and Statistics, Beijing Institute of Technology,
Beijing 100081, China
}

\email{hiro.oh@ed.ac.uk}

\subjclass[2020]{Primary 42B15; Secondary 42B20}

\keywords{multilinear Fourier multiplier; transference principle;
product space; strong and weak-type estimate; cone multiplier}

\begin{abstract}
We establish transference results for multilinear Fourier multiplier
operators from Euclidean spaces to product spaces of the form
$\mathbb R^{d_1}\times\mathbb T^{d_2}$. We prove strong and weak-type transference
principles that preserve the corresponding Euclidean operator bounds.
As applications, we obtain bounds for certain multilinear operators, as well as for some multiplier operators arising naturally in PDEs.
\end{abstract}


\maketitle

\tableofcontents

\section{Introduction}\label{SEC:intro}

Transference of boundedness for operators, in both the linear and multilinear settings, is by now a well-established area of research. It originated with the pioneering work of de Leeuw~\cite{deLeeuw} and was subsequently placed in a more general framework by Coifman and Weiss~\cite{CW76}.
One of the classical applications of linear transference
is to deduce the $L^p$-boundedness of the periodic Hilbert transform from that of the Hilbert transform on the real line, which  in turn yields the convergence of Fourier series
for $L^p$-functions on the torus for $1<p<\infty$.
It is therefore natural to seek analogous results for multilinear operators, leading to the development of a corresponding multilinear transference theory. Without attempting to provide an exhaustive bibliography, we mention the contributions of Murray~\cite{Murray}, Grafakos and Weiss \cite{GrafakosWeiss}, Grafakos and Honz\'ik \cite{GrafakosHonzik}, Fan and Sato \cite{FS}, and Blasco \cite{Blasco}. For bilinear transference results on Lorentz and Orlicz spaces, we refer readers to \cite{BlascoVillarroya, BlascoUster}, the latter of which contains an extensive and up-to-date bibliography on the subject. More general multilinear transference results in non-commutative settings can be found, for example, in \cite{CJKM} and \cite{CKV}.

In \cite[Theorem~3]{FS}, the authors proved a multilinear
``Euclidean-to-torus'' transference result for Fourier multiplier operators,
under the assumption that the relevant lattice tuples are Lebesgue points
of the symbol. In certain PDE applications, however, one encounters
product spaces, in which some variables are Euclidean and others
are periodic. In particular, estimates on
$\R\times\T^2$ related to cone multipliers and hyperbolic Riesz potentials
lead naturally to the question of transferring multiplier bounds from
$\R\times\R^2$ to $\R\times\T^2$; see \cite{Zine,FZ26}.
For example, in the analysis of the hyperbolic sinh--Gordon
model \cite{FZ26}, the relevant function spaces are anisotropic
Sobolev spaces \cite[Definition~2.2]{FZ26} and a key
multiplier estimate is established in \cite[Lemma~2.5]{FZ26}. In its proof,
the authors point out the need for precisely such a mixed-type transference
principle: ``Strictly speaking, we need a transference result from
multipliers on $\R\times\R^2$ to multipliers on
$\R\times\T^2$. We were not able to find a ready-made result in this
mixed-type context in the literature''; see
\cite[Footnote~10, p.~21]{FZ26}.

One of the modest goals of this paper is to provide a multilinear
transference principle
from Euclidean spaces to product spaces
for bounded
Fourier multipliers that can serve as a convenient reference for such
applications. In Section~\ref{SEC:Applications}, we illustrate the scope
of our results through several examples, including multilinear
Riesz-type transforms, tensor-like multipliers, and multipliers
arising from PDEs. In the latter setting, we consider the damped
Klein-Gordon propagator as well as a smooth localization to the
characteristic cone of the wave operator, thereby illustrating explicitly
how Euclidean multiplier estimates can be transferred to product spaces.

The paper is organized as follows; Section~\ref{SEC:Notaprelims} contains
the necessary background and preliminary lemmas.
 The main strong and
weak-type transference results are stated and proved in
Sections~\ref{SEC:Multilinear} and \ref{SEC:Weak}, respectively, while,
 in
Section~\ref{SEC:Applications}, we present the applications
described above.

\section{Notation and preliminaries}\label{SEC:Notaprelims}

Given an $m$-tuple $(\xi_1, \dots, \xi_m)$, we
 use the following shorthand notation:
\begin{align}
 \xi_{1\cdots m} = \xi_1 + \cdots +\xi_m.
 \label{sum1}
\end{align}

\noi
Similarly, given an $(m+1)$-tuple $(\al_0, \al_1, \dots, \al_m)$, we set
\begin{align}
 \al_{0\cdots m} = \al_0 + \al_1 + \cdots +\al_m.
 \label{sum2}
\end{align}


Given $d\in\mathbb N$, let $\T^d=\R^d/\Z^d$ denote the $d$-dimensional torus. Given a function $f\in L^1(\R^d)$, we write
\[
\mathcal F(f)(\xi)=\widehat{f}(\xi)=\int_{\R^d}f(x)e^{-2\pi ix\cdot\xi}\,dx, \quad \xi\in\R^d,
\]
for the Fourier transform of $f$. 
With this normalization, the inverse Fourier transform of $f$ is given by
$\mathcal F^{-1}(f)(x)=\widehat f(-x)$.
In this paper, we consider a $d$-dimensional mixed product space
$\R^{d_1}\times\T^{d_2}$ for
$d = d_1+ d_2$, where
$d_1, d_2\in \N$. 
In the following, we write
$x \in \R^{d_1}\times\T^{d_2}$ as $x = (x_1, x_2)$,
where $x_1 \in \R^{d_1}$ and $x_2 \in \T^{d_2}$.
Given a function $f\in L^1(\R^{d_1}\times\T^{d_2})$, we define its Fourier transform by
\[
 \widehat f(\xi,n)
 =\int_{\R^{d_1}}\int_{\T^{d_2}}
 f(x_1, x_2)e^{-2\pi i(x_1\cdot\xi+x_2\cdot n)}\,dx_2\, dx_1,\quad \xi\in \R^{d_1},\,\, n\in\Z^{d_2}.
\]

We let $\S(\R^d)$ denote the Schwartz class of functions on $\R^d$. A result used repeatedly in this work is the Poisson summation formula.
Given
 $f\in\S(\R^d)$,
let $F$ denote the periodization of $f$, given by
\[ F(x)=  \sum_{k\in\Z^d}f(x+k), \quad x \in \R^d.\]

\noi
Then, $F$ is given by the following Fourier series:
%
\begin{equation}
\label{Poisson}
F(x) 
=\sum_{n\in\Z^d}\widehat f(n)e^{2\pi in\cdot x},\quad x\in\R^d,
\end{equation}

\noi
where
 $\widehat f(n)=\int_{\R^{d}}f(x)e^{-2\pi in\cdot x} dx$
denotes the Fourier transform of $f$ (at the frequency $n \in \Z^d$) as a function on $\R^d$.
See, for example, \cite[Theorem 3.2.8]{Gra1}.

Let $d_2 \in \N$.
Given $\eps > 0$,
define $L_\eps$ by
\begin{align}
 L_\eps (x_2)=e^{-\eps\pi |x_2|^2}, \quad
x_2\in\R^{d_2}.
\label{L1}
\end{align}

\noi
The following lemma is well known and will be used in the proof of our main transference result; see, for example, \cite[(4.3.13)]{Gra1}. For readers' convenience,  we sketch its proof.

\begin{lemma}
\label{lem:gaussian-average}
Let $f\in L^1(\T^{d_2})$ and extend it $\Z^{d_2}$-periodically onto $\R^{d_2}$.  Then,
we have
\begin{equation}
 \lim_{\eps\to 0}
 \eps^{d_2/2}\int_{\R^{d_2}}f(x_2)L_\eps (x_2)\, dx_2
 =\int_{\T^{d_2}}f(x_2)\, dx_2.
 \label{eq:gaussian-average}
\end{equation}

\noi
Moreover, given $0<r<\infty$ and $F\in L^r(\R^{d_1}\times\T^{d_2})$, we have
\begin{equation}
 \lim_{\eps\to 0}
 \eps^{d_2/(2r)}
 \norm{F(x_1, x_2)L_{\eps/r}(x_2)}_{L^r(\R^{d_1}\times\R^{d_2})}
 =\norm{F}_{L^r(\R^{d_1}\times\T^{d_2})},
 \label{eq:norm-average}
\end{equation}

\noi
where $F$ on the left-hand side
is the periodic extension
\textup{(}in the second variable $x_2$\textup{)}
of $F$ onto $\R^{d_1}\times\R^{d_2}$

\end{lemma}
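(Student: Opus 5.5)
The plan is to prove \eqref{eq:gaussian-average} first and then deduce \eqref{eq:norm-average} from it by applying it to a suitable periodic function.

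\emph{First identity.} Decompose $\R^{d_2}=\bigcup_{k\in\Z^{d_2}}([0,1)^{d_2}+k)$ and use the periodicity of $f$:
\[
\eps^{d_2/2}\int_{\R^{d_2}} f(x_2)L_\eps(x_2)\,dx_2
=\int_{\T^{d_2}} f(x_2)\, G_\eps(x_2)\,dx_2,
\qquad
G_\eps(x_2):=\eps^{d_2/2}\sum_{k\in\Z^{d_2}} e^{-\eps\pi|x_2+k|^2}.
\]
Interchanging sum and integral is justified by Tonelli applied to $|f|$, since $G_\eps$ is finite. The function $G_\eps$ is the periodization of the Schwartz function $\eps^{d_2/2}L_\eps$, whose Fourier transform is $e^{-\pi|\xi|^2/\eps}$. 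The Poisson summation formula \eqref{Poisson} then gives
\[
G_\eps(x_2)=\sum_{n\in\Z^{d_2}} e^{-\pi|n|^2/\eps}e^{2\pi i n\cdot x_2}.
\]
Hence
\[
\sup_{x_2}|G_\eps(x_2)-1|\le \sum_{n\ne0}e^{-\pi|n|^2/\eps}\to0
\quad\text{as }\eps\to0,
\]
and because $f\in L^1(\T^{d_2})$ we get
\[
\Bigl|\int_{\T^{d_2}} f\,(G_\eps-1)\Bigr|\le \|G_\eps-1\|_{L^\infty}\|f\|_{L^1}\to0 .
\]
The only point needing care is uniform convergence of $G_\eps$, and it is immediate from the Poisson representation. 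Note that only $\eps\to0$ matters, so the tail bound with $\eps\le1$ suffices.

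\emph{Second identity.} Raise the left-hand side of \eqref{eq:norm-average} to the $r$-th power. Since $L_{\eps/r}(x_2)^r=e^{-\eps\pi|x_2|^2}=L_\eps(x_2)$, Fubini gives
\[
\eps^{d_2/2}\int_{\R^{d_1}}\int_{\R^{d_2}}|F(x_1,x_2)|^rL_\eps(x_2)\,dx_2\,dx_1
=\eps^{d_2/2}\int_{\R^{d_2}} h(x_2)L_\eps(x_2)\,dx_2,
\]
where
\[
h(x_2):=\int_{\R^{d_1}}|F(x_1,x_2)|^r\,dx_1 .
\]
The function $h$ is $\Z^{d_2}$-periodic, nonnegative and measurable, and $\|h\|_{L^1(\T^{d_2})}=\|F\|_{L^r(\R^{d_1}\times\T^{d_2})}^r<\infty$. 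Applying \eqref{eq:gaussian-average} to $h$ shows that the expression converges to $\|F\|_{L^r}^r$. Taking $r$-th roots, which is continuous on $[0,\infty)$, yields \eqref{eq:norm-average}. This works for every $0<r<\infty$, since no convexity is used.
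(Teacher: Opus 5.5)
Your proof is correct and follows essentially the same route as the paper. You periodize the Gaussian, use Poisson summation to get $\sup|G_\eps-1|\le\sum_{n\ne0}e^{-\pi|n|^2/\eps}\to0$, and then apply the first identity to $h(x_2)=\int_{\R^{d_1}}|F(x_1,x_2)|^r\,dx_1$ via $L_{\eps/r}^r=L_\eps$. One small wording point: the Tonelli/Fubini interchange really uses that $G_\eps$ is \emph{bounded}, not merely finite, which the Poisson representation supplies.
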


\begin{proof}
Using the periodicity of $f$ and Fubini's  theorem, we get
\[
\eps^{d_2/2}\int_{\R^{d_2}}f(x_2)L_\eps(x_2)\, dx_2
 =\int_{[0, 1)^{d_2}} f(x_2)P_\eps(x_2)\, dx_2,
\]
where $P_\eps$ is given by
$$
P_\eps (x_2)=\eps^{d_2/2}\sum_{k\in\Z^{d_2}}L_\eps (x_2+k).
$$
The Poisson summation formula \eqref{Poisson}
with the invariance of the Gaussian under the Fourier transform then yields
\[
P_\eps(x_2)=\sum_{n\in\Z^{d_2}}L_{1/\eps}(n)e^{2\pi i n\cdot x_2}.
\]
Hence, as $\eps\to 0$, we have
\[
 \sup_{x_2\in [0, 1)^{d_2}}\left|P_\eps(x_2)-1\right|
 \leq\sum_{n\in\Z^{d_2}\setminus\{0\}} L_{1/\eps}(n)\too 0,
 \]

\noi
where the last step follows from \eqref{L1}.
This
 proves \eqref{eq:gaussian-average}.
 As for \eqref{eq:norm-average},
 given $F \in L^r( \R^{d_1}\times\T^{d_2})$,
 simply define $f \in L^1( \T^{d_2})$ by
\[
 f(x_2)=\int_{\R^{d_1}}|F(x_1, x_2)|^r\, dx_1,\quad  x_2\in\T^{d_2},
\]

\noi
and apply
\eqref{eq:gaussian-average} to $f$. \end{proof}

In Section~\ref{SEC:Multilinear}, we find it useful to work with normalized versions of the Gaussian density $L_\eps$ introduced in \eqref{L1}.
 Namely, given $\alpha, \eps>0$,
 set
 \begin{align}
G_{\eps, \alpha}=(\eps\alpha)^{-d_2/2}L_{1/(\epsilon\alpha)}.
 \label{G1}
 \end{align}

\noi
It is straightforward
to see that $\int_{\R^{d_2}} G_{\eps, \alpha}(x_2)\, dx_2=1$ and
\begin{equation}
G_{\eps, \alpha_1}*G_{\eps, \alpha_2}=G_{\eps, \alpha_1+\alpha_2}.
\label{G1a}
\end{equation}
For $m\in\mathbb N$, we identify $\R^{md_2}=\underbrace{\R^{d_2}\times\cdots\times\R^{d_2}}_{m\text{-times}}$, and write
\(
\boldsymbol y=(y_1,\ldots, y_m)\in\R^{md_2}.
\) A repeated application of the convolution identity \eqref{G1a} immediately yields the following result.

\begin{lemma}
\label{lem:resonance}
Let $m\in\mathbb N$ and $\alpha_0, \al_1,\ldots, \al_m>0$ such that
\(
\displaystyle\sum_{j=0}^m \al_j=1.
\)
For $x_2\in\R^{d_2}$ and $\boldsymbol y\in\R^{md_2}$, define
\begin{align}
 \Gamma_{\eps, x_2}(\boldsymbol y)
 =\eps^{d_2/2}G_{\eps, \al_0}
(x_2- y_{1\cdots m})
\prod_{j=1}^m G_{\eps, \al_j}(y_j).
 \label{G2}
\end{align}

\noi
Then, we have
\begin{equation}
 \int_{\R^{md_2}}\Gamma_{\eps, x_2}(\boldsymbol y)\, d\boldsymbol y
 =G_{\eps,  \al_{0\cdots m}}(x_2)=L_{1/\eps}(x_2).
\label{G2a}
\end{equation}

\noi
In \eqref{G2} and \eqref{G2a}, we used the shorthand notation \eqref{sum1}
and \eqref{sum2}.

\end{lemma}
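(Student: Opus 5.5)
We integrate out the variables $y_m, y_{m-1}, \dots, y_1$ one at a time, each time using the convolution identity \eqref{G1a}. Then we check the final normalization directly from \eqref{G1} and \eqref{L1}.

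\emph{Step 1: the innermost variable.} Write $x_2 - y_{1\cdots m} = (x_2 - y_{1\cdots m-1}) - y_m$. The only factors of \eqref{G2} that depend on $y_m$ are $G_{\eps,\al_0}(x_2 - y_{1\cdots m-1} - y_m)$ and $G_{\eps,\al_m}(y_m)$. By Fubini's theorem, which applies because all factors are nonnegative and integrable Gaussians, integration in $y_m$ produces
\[
\int_{\R^{d_2}} G_{\eps,\al_0}(x_2 - y_{1\cdots m-1} - y_m)\,G_{\eps,\al_m}(y_m)\,dy_m
= \bigl(G_{\eps,\al_0}*G_{\eps,\al_m}\bigr)(x_2 - y_{1\cdots m-1}),
\]
and \eqref{G1a} turns this into $G_{\eps,\al_0+\al_m}(x_2 - y_{1\cdots m-1})$.

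\emph{Step 2: induction.} The result of Step 1 has the same structure as \eqref{G2}, with $m$ replaced by $m-1$ and $\al_0$ replaced by $\al_0+\al_m$. Repeating the argument for $y_{m-1},\dots,y_1$ gives
\[
\int_{\R^{md_2}}\Gamma_{\eps,x_2}(\boldsymbol y)\,d\boldsymbol y = \eps^{d_2/2}\,G_{\eps,\al_{0\cdots m}}(x_2) = \eps^{d_2/2}\,G_{\eps,1}(x_2).
\]
The formal way to run this is induction on $m$. The case $m=1$ is exactly \eqref{G1a}.

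\emph{Step 3: normalization.} By \eqref{G1}, $G_{\eps,1} = \eps^{-d_2/2} L_{1/\eps}$. The factor $\eps^{d_2/2}$ therefore cancels and leaves $L_{1/\eps}(x_2)$, which is the last equality in \eqref{G2a}.

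This factor bookkeeping is the one step that needs care. With the prefactor $\eps^{d_2/2}$ included in \eqref{G2}, the middle expression in \eqref{G2a} is best read as $\eps^{d_2/2}G_{\eps,\al_{0\cdots m}}(x_2)$, and that equals $L_{1/\eps}(x_2)$. The remaining ingredient is \eqref{G1a} itself, which I take as given; if needed, it can be checked on the Fourier side. There $\widehat{G_{\eps,\al}}(\xi) = e^{-\pi\eps\al|\xi|^2}$, and these transforms multiply by adding the parameters $\al$.
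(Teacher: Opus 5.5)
Your proof is correct and follows the paper's argument, which is exactly the repeated application of the convolution identity \eqref{G1a}; your induction on $m$ just makes that explicit. Your normalization remark is also right: since $G_{\eps,1}=\eps^{-d_2/2}L_{1/\eps}$, the middle expression in \eqref{G2a} should carry the factor $\eps^{d_2/2}$, and the identity the paper actually uses later, in \eqref{T12} and \eqref{G4}, is $\int\Gamma_{\eps,x_2}(\boldsymbol y)\,d\boldsymbol y=L_{1/\eps}(x_2)$.
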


Note that, in particular, if $x_2\neq 0$, then we have
\begin{align*}
\lim_{\eps\to 0}\int_{\R^{md_2}}\Gamma_{\eps, x_2}(\boldsymbol y)\, d\boldsymbol y=0,
\end{align*}

\noi
while, if $x_2=0$,  then we have
\begin{equation}
 \lim_{\eps\to 0}
 \int_{\R^{md_2}} H(\boldsymbol y)\Gamma_{\eps, 0}(\boldsymbol y)\,d\boldsymbol y =H(0)
 \label{G4}
\end{equation}
for any bounded function  $H$ that is continuous at the origin.

Finally, we recall the standard definition of  a multilinear Fourier multiplier operator.
Fix
 $m\in\mathbb N$.
  As above, we use the notation
\(
 \boldsymbol\xi=(\xi_1,\ldots,\xi_m)\in \R^{md}.
\)
Given
a  continuous and bounded function
$\sigma:\R^{md}\to\mathbb C$, we say that $T_\sigma$ is an \emph{$m$-linear Fourier multiplier operator} with symbol $\sigma$,  if we have
\begin{align*}
T_\sigma(f_1,\ldots, f_m)(x)=\int_{\R^{md}}\sigma(\boldsymbol\xi)\prod_{j=1}^m\widehat{f_j}(\xi_j)e^{2\pi ix\cdot \sum_{j=1}^m \xi_j}\,d\boldsymbol\xi,
\quad x\in\R^d,
\end{align*}

\noi
for  any  $f_j\in\S(\R^d)$, $j = 1, \dots, m$.
Letting $K_{T_\sigma}:=\mathcal F^{-1}_{\R^{md}}(\sigma)$, we see that
\begin{align*}
T_{\sigma}(f_1,\ldots, f_m)(x)
& =\int_{\R^{md}}K_{T_\sigma}(\boldsymbol y)\prod_{j=1}^m f_j(x-y_j)\,dy_1\ldots\,dy_m.
\end{align*}

%

\noi
It is an immediate  consequence of Minkowski's integral inequality and H\"older's inequality that if $K_{T_\sigma}\in L^1(\R^{md})$, then $T_\sigma$ is bounded from $L^{p_1}(\R^d)\times\cdots\times L^{p_m}(\R^d)\to L^p(\R^d)$ for any $1\leq p, p_j\le\infty$, $j = 1, \dots, m$,  such that $\frac{1}{p}=\sum_{j=1}^m \frac{1}{p_j}$; indeed, we have the following bound:
\begin{align}
\|T_\sigma\|_{L^{p_1}\times\cdots\times L^{p_m}\to L^p}\leq\|K_{T_\sigma}\|_{L^1(\R^{md})}.
\label{T0}
\end{align}

\section{Multilinear transference: strong-type results}\label{SEC:Multilinear}

Let $m\in\mathbb N$ and $d, d_1, d_2\in\mathbb N$ such that $d=d_1+d_2$.
Let $\sigma: \R^{md_1}\times\R^{md_2}\to\mathbb C$ be bounded and continuous. As before, we identify $\R^{md}=\R^{md_1}\times\R^{md_2}$ and write
\[
\boldsymbol\xi=(\xi_1,\ldots,\xi_m)\in\R^{md_1}
\quad \text{and}\quad
 \boldsymbol\eta=(\eta_1,\ldots,\eta_m)\in\R^{md_2}.
\]

\noi
Then, the
$m$-linear Fourier multiplier operator $T_\s$  with symbol $\sigma$ is given by
\begin{equation}
\label{T1}
\begin{split}
& T_\sigma (f_1,\ldots, f_m)(x_1, x_2)\\
& \quad =\int_{\R^{md_1}}\int_{\R^{md_2}}\sigma(\boldsymbol\xi, \boldsymbol\eta)\prod_{j=1}^m\widehat{f_j}(\xi_j, \eta_j)e^{2\pi i\sum_{j=1}^m (x_1\cdot\xi_{j}+x_2\cdot\eta_j)}\,d\boldsymbol\xi\,d\boldsymbol\eta
\end{split}
\end{equation}

\noi
for
 $x_{1}\in\R^{d_1}$,  $x_{2}\in\R^{d_2}$,
 and
 $f_j\in\S(\R^{d})$, $j = 1, \dots, m$,

Next, we introduce a multilinear Fourier multiplier operator
on the product space $\R^{d_1}\times\T^{d_2}$.
As before, we write
\[ \boldsymbol\xi = (\xi_1, \dots, \xi_m) \in\R^{md_1}
\quad \text{and}\quad
\boldsymbol n = (n_1, \dots, n_m)\in\Z^{md_2}.\]

\noi
Given a bounded function
 $\sigma:\R^{md_1}\times\Z^{md_2}\to\mathbb C$,
 we define $S_\s$
 by setting
\begin{equation}
\label{T2}
\begin{split}
& S_\sigma (F_1,\ldots, F_m)(x_1, x_2)\\
& \quad
=\sum_{\boldsymbol n\in\Z^{md_2}}\int_{\R^{md_1}}\sigma(\boldsymbol\xi, \boldsymbol n)\prod_{j=1}^m\widehat{F_j}(\xi_{j}, n_{j})e^{2\pi i\sum_{j=1}^m (x_1\cdot\xi_{j}+x_2\cdot n_{j})}\,d\boldsymbol\xi
\end{split}
\end{equation}

\noi
for
$x_{1}\in\R^{d_1}$,
$x_2\in\T^{d_2}$,
 and
 $F_j\in\S(\R^{d_1}\times\T^{d_2})$, $j = 1, \dots, m$.
 Here, the notation
$ \S(\R^{d_1}\times\T^{d_2})$
denotes the collection of functions
$F \in C^\infty (\R^{d_1}\times\T^{d_2})$
such that
\[
\sup_{(x_1, x_2) \in \R^{d_1} \times \T^{d_2}}
|x_1^{\al_1}
\dd_{x_1}^{\be_1} \dd_{x_2}^{\be_2} F(x_1, x_2)|< \infty\]

\noi
for any multi-indices $\al_1 , \be_1 \in \N_0^{md_1}$ and
$\be_2  \in \N_0^{md_2}$, where $\N_0 = \N \cup\{0\}$.

In the following,
we use $K_{T_\s}$ (and $K_{S_\s}$)
to
 denote the convolution kernel of $T_\sigma$
on $\R^{md_1}\times\R^{md_2}$
defined  in \eqref{T1} (and $S_\sigma$
on $\R^{md_1}\times\T^{md_2}$
defined in  \eqref{T2}, respectively).
Namely, we have
 \[
K_{T_\s}=\mathcal F^{-1}_{\R^{md}}(\sigma)\qquad\text{and}\qquad
K_{S_\s}=\mathcal F^{-1}_{\R^{md_1}\times\T^{md_2}}(\sigma).
\]

\noi
With the notation above, we first go over the following simple  transference result.

\begin{proposition}\label{LEM:multitrans1}
Let $m\in\mathbb N$ and $d, d_1,d_2\in\N$ with $d=d_1+d_2$.
Given a bounded and continuous function $\sigma: \R^{md_1}\times\R^{md_2}\to\mathbb C$,
let $T_\s$ and $S_\s$ be as in \eqref{T1} and \eqref{T2}, respectively.
If $K_{T_\s} \in L^1(\R^{md_1}\times\R^{md_2})$, then $K_{S_\s}\in
L^1(\R^{md_1}\times\T^{md_2})$
with the  bound\textup{:}
\begin{align}
\|K_{S_\s}\|_{L^1(\R^{md_1}\times\T^{md_2})}
\le
\|K_{T_\s}\|_{L^1(\R^{md})} < \infty.
\label{multitrans1}
\end{align}

\noi
In particular,
 given any  \(1\leq p, p_1,\ldots, p_m<\infty\)
satisfying
\begin{align}
\displaystyle\frac{1}{p}=\displaystyle\sum_{j=1}^m \displaystyle\frac{1}{p_j},
\label{T3}
\end{align}
the operator  $S_\s$ is bounded from $L^{p_1}(\R^{d_1}\times\T^{d_2})\times\cdots \times L^{p_m}(\R^{d_1}\times\T^{d_2})$ to $L^p(\R^{d_1}\times\T^{d_2})$
with the following operator bound\textup{:}
\begin{align}
\| S_\s\|_{L^{p_1}\times\cdots\times L^{p_m}\to L^{p}}
\le
\|K_{T_\s}\|_{L^1(\R^{md})}.
\label{multitrans2}
\end{align}
\end{proposition}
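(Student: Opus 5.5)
The plan is to identify $K_{S_\s}$ explicitly as the periodization of $K_{T_\s}$ in the $\R^{md_2}$ variables, and then use the triangle inequality. For $(\boldsymbol x_1,\boldsymbol x_2)\in\R^{md_1}\times\R^{md_2}$, set
\[
\widetilde K(\boldsymbol x_1,\boldsymbol x_2)=\sum_{\boldsymbol k\in\Z^{md_2}}K_{T_\s}(\boldsymbol x_1,\boldsymbol x_2+\boldsymbol k).
\]
By Fubini--Tonelli and a decomposition of $\R^{md_2}$ into the translates $[0,1)^{md_2}+\boldsymbol k$, this series converges absolutely for a.e.\ $(\boldsymbol x_1,\boldsymbol x_2)$. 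The function $\widetilde K$ is $\Z^{md_2}$-periodic in $\boldsymbol x_2$ and satisfies
\[
\|\widetilde K\|_{L^1(\R^{md_1}\times\T^{md_2})}\le \sum_{\boldsymbol k}\int_{\R^{md_1}}\int_{[0,1)^{md_2}}|K_{T_\s}(\boldsymbol x_1,\boldsymbol x_2+\boldsymbol k)|\,d\boldsymbol x_2\,d\boldsymbol x_1=\|K_{T_\s}\|_{L^1(\R^{md})}.
\]

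The next step is to check that $\widetilde K=K_{S_\s}$, i.e.\ that the Fourier transform of $\widetilde K$ on $\R^{md_1}\times\T^{md_2}$ equals $\s(\boldsymbol\xi,\boldsymbol n)$. This is the $L^1$ form of Poisson summation \eqref{Poisson}. Integrating $\widetilde K$ against $e^{-2\pi i(\boldsymbol x_1\cdot\boldsymbol\xi+\boldsymbol x_2\cdot\boldsymbol n)}$ over $\R^{md_1}\times[0,1)^{md_2}$, we may interchange sum and integral by absolute integrability. Since $e^{-2\pi i\boldsymbol k\cdot\boldsymbol n}=1$, the unfolded integral is
\[
\widehat{K_{T_\s}}(\boldsymbol\xi,\boldsymbol n)=\s(\boldsymbol\xi,\boldsymbol n).
\]
Here we use that $K_{T_\s}\in L^1$ and that $\s$ is continuous, so $\s$ coincides everywhere with the Fourier transform of its $L^1$ inverse transform. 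Because a function in $L^1(\R^{md_1}\times\T^{md_2})$ is determined by its Fourier transform, $\widetilde K$ is the convolution kernel of $S_\s$. Combined with the bound above, this gives \eqref{multitrans1}.

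For \eqref{multitrans2}, insert the definition \eqref{T2} with $\widehat{F_j}$ written as integrals and exchange orders, justified for $F_j\in\S(\R^{d_1}\times\T^{d_2})$ and $K_{S_\s}\in L^1$. This yields the representation
\[
S_\s(F_1,\dots,F_m)(x)=\int_{\R^{md_1}\times\T^{md_2}}K_{S_\s}(\boldsymbol y)\prod_{j=1}^m F_j(x-y_j)\,d\boldsymbol y,
\]
with subtraction in $x_2$ taken modulo $\Z^{d_2}$. Minkowski's integral inequality and the translation invariance of the $L^{p_j}(\R^{d_1}\times\T^{d_2})$ norms then give
\[
\|S_\s(F_1,\dots,F_m)\|_{L^p}\le\int|K_{S_\s}(\boldsymbol y)|\,\Big\|\prod_{j=1}^m F_j(\cdot-y_j)\Big\|_{L^p}\,d\boldsymbol y.
\]
Hölder's inequality with \eqref{T3} bounds the inner norm by $\prod_j\|F_j\|_{L^{p_j}}$, which proves \eqref{multitrans2}, exactly as in \eqref{T0}.

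The main obstacle is the identification $\widetilde K=K_{S_\s}$ without any decay assumptions beyond $L^1$. I will handle it via the coefficient computation above, which needs only Fubini, rather than via pointwise Poisson summation for Schwartz functions.
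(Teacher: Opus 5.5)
Your proposal is correct and follows essentially the same route as the paper. You periodize $K_{T_\s}$ in the $\R^{md_2}$ variables, bound its $L^1(\R^{md_1}\times\T^{md_2})$ norm by Fubini--Tonelli, identify the Fourier transform of the periodization with $\s(\boldsymbol\xi,\boldsymbol n)$ by unfolding the integral, and then deduce \eqref{multitrans2} from the convolution representation via Minkowski and H\"older as in \eqref{T0}. Your observation that continuity of $\s$ is what guarantees $\widehat{K_{T_\s}}=\s$ at every point of the null set $\R^{md_1}\times\Z^{md_2}$, not merely almost everywhere, is a detail the paper leaves implicit and is worth keeping.
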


\begin{proof}
Let $K_{T_\sigma}\in L^1(\mathbb R^{md_1}\times
\mathbb R^{md_2})$ be the convolution kernel of $T_\sigma$. Define its
periodization in the second variable by
\begin{align*}
    \widetilde K(\boldsymbol x,\boldsymbol y)
    = \sum_{\boldsymbol k\in\mathbb Z^{md_2}}K_{T_\sigma}(\boldsymbol x,\boldsymbol y+\boldsymbol k),
    \qquad
    (\boldsymbol x,\boldsymbol y)\in\mathbb R^{md_1}\times\mathbb T^{md_2}.
\end{align*}

\noi
By the Fubini--Tonelli theorem, the sum converges absolutely for almost
every $(\boldsymbol x,\boldsymbol  y)$, and $\widetilde K\in
L^1(\mathbb R^{md_1}\times\mathbb T^{md_2})$. Moreover,
we have
\begin{align}
\begin{split}
    \|\widetilde K\|_{L^1(\mathbb R^{md_1}\times\mathbb T^{md_2})}
    &\le
    \int_{\mathbb R^{md_1}}\int_{\mathbb T^{md_2}}
    \sum_{k\in\mathbb Z^{md_2}}
    |K_{T_\sigma}(\boldsymbol x,\boldsymbol y+\boldsymbol k)|\,d\boldsymbol yd\boldsymbol x \\
    &=
    \int_{\mathbb R^{md_1}}\int_{\mathbb R^{md_2}}
    |K_{T_\sigma}(\boldsymbol x,\boldsymbol y)|\,d\boldsymbol yd\boldsymbol x \\
    &=
    \|K_{T_\sigma}\|_{L^1(\mathbb R^{md})}.
\end{split}
\label{Z1}
\end{align}

We claim that $\widetilde K=K_{S_\sigma}$. Indeed, for every
$n\in\mathbb Z^{md_2}$, the $\boldsymbol n$th Fourier coefficient of
$\widetilde K$ in the periodic variable is given, for almost every
$\boldsymbol  x\in\mathbb R^{md_1}$, by
\begin{align*}
    \int_{\mathbb T^{md_2}}\widetilde K(\boldsymbol x,\boldsymbol y)e^{-2\pi i \boldsymbol n\cdot \boldsymbol y}\,d\boldsymbol y
    &=
    \int_{\mathbb R^{md_2}}
    K_{T_\sigma}(\boldsymbol x,\boldsymbol y)e^{-2\pi i \boldsymbol n\cdot \boldsymbol y}\,d\boldsymbol y\\
    &=
    \mathcal F_{\mathbb R^{md_2}}^{(2)}
    (K_{T_\sigma})(\boldsymbol x,\boldsymbol n).
\end{align*}
Taking the Fourier transform also in
$\boldsymbol x\in\mathbb R^{md_1}$, we obtain
\begin{align*}
    \mathcal F_{\mathbb R^{md_1}}
    \left[
    \int_{\mathbb T^{md_2}}
    \widetilde K(\,\cdot\,,\boldsymbol y)e^{-2\pi i \boldsymbol n\cdot \boldsymbol y}\,d\boldsymbol y
    \right](\boldsymbol \xi)
    =
    \sigma(\boldsymbol \xi,\boldsymbol n).
\end{align*}

\noi
Thus,  $\widetilde K$ has Fourier transform $\sigma(\boldsymbol \xi,\boldsymbol n)$ on
$\mathbb R^{md_1}\times\mathbb Z^{md_2}$ and
hence
$\widetilde K=K_{S_\sigma}$.
Therefore,
\eqref{multitrans1} follows from \eqref{Z1}.
The bound \eqref{multitrans2} then follows from \eqref{multitrans1} and \eqref{T0}.
\end{proof}

It is worth noting that the conclusion of Proposition~\ref{LEM:multitrans1} is false under the assumption $0<p<1$: Minkowski's inequality is not available in this case and, in fact, it is well known that convolution by an \(L^1\)-kernel need not be bounded
on \(L^p\) when \(p<1\).

The next proposition establishes a general transference result on product spaces in the non-endpoint case. Unlike the large-box cutoff argument used in \cite{FS},
we proceed with de Leeuw's argument via Gaussians as in the linear proof presented in \cite[Theorem~4.3.7 and Lemmas~4.3.8 and 4.3.9]{Gra2}.

\begin{proposition}
\label{LEM:multitrans2}
Let $m\in\mathbb N$ and $d, d_1,d_2\in\N$ with $d=d_1+d_2$.
Given a bounded and continuous function $\sigma: \R^{md_1}\times\R^{md_2}\to\mathbb C$,
let $T_\s$ and $S_\s$ be as in \eqref{T1} and \eqref{T2}, respectively.
Suppose that $T_\s$
is bounded from
\(
 L^{p_1}(\R^d)\times\cdots\times L^{p_m}(\R^d)\) to $L^p(\R^d)$
for some
\(
 1< p, p_1,\ldots,p_m<\infty\), satisfying \eqref{T3}.
Then,  $S_\sigma$ in \eqref{T2} is bounded from
\(
 L^{p_1}(\R^{d_1}\times\T^{d_2})\times\cdots\times
 L^{p_m}(\R^{d_1}\times\T^{d_2})\) to \(
 L^p(\R^{d_1}\times\T^{d_2})\),
satisfying the bound\textup{:}
\begin{equation}
 \norm{S_\sigma}_{L^{p_1}\times\cdots\times L^{p_m}\to L^p}
 \leq
 \norm{T_\sigma}_{L^{p_1}\times\cdots\times L^{p_m}\to L^p}.
 \label{eq:mainbound}
\end{equation}
\end{proposition}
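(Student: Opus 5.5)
We follow de Leeuw's Gaussian argument, applied only in the periodic variables. By density and duality it suffices to show, for trigonometric-type test functions $F_j\in\S(\R^{d_1}\times\T^{d_2})$ with finitely many nonzero $n$-modes and $G\in\S(\R^{d_1}\times\T^{d_2})$ of the same form, that
\[
\Big|\int_{\R^{d_1}}\int_{\T^{d_2}} S_\s(F_1,\dots,F_m)\,\overline{G}\,dx_2\,dx_1\Big|
\le \norm{T_\s}\prod_j\norm{F_j}_{L^{p_j}}\norm{G}_{L^{p'}},
\]
where $\norm{T_\s}$ is the $L^{p_1}\times\cdots\times L^{p_m}\to L^p$ norm. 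Here $1<p<\infty$ makes $L^{p'}$ duality available, and the density of such $F_j$ in $L^{p_j}$ (finite $p_j$) extends the bound to all of $L^{p_j}$.

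\textbf{Euclidean test functions.} We extend $F_j,G$ periodically in $x_2$ and set
\[
f_j^\eps(x_1,x_2)=F_j(x_1,x_2)L_{\eps/p_j}(x_2),\qquad g^\eps(x_1,x_2)=G(x_1,x_2)L_{\eps/p'}(x_2).
\]
These are Schwartz on $\R^d$, and by \eqref{eq:norm-average} in Lemma~\ref{lem:gaussian-average} we have $\eps^{d_2/(2p_j)}\norm{f_j^\eps}_{L^{p_j}(\R^d)}\to\norm{F_j}_{L^{p_j}}$ and $\eps^{d_2/(2p')}\norm{g^\eps}_{L^{p'}}\to\norm{G}_{L^{p'}}$. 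Since $\sum_j 1/p_j+1/p'=1$, the boundedness of $T_\s$ gives
\[
\eps^{d_2/2}\Big|\int_{\R^d}T_\s(f_1^\eps,\dots,f_m^\eps)\overline{g^\eps}\Big|\le\norm{T_\s}\prod_j \eps^{d_2/(2p_j)}\norm{f^\eps_j}_{L^{p_j}}\,\eps^{d_2/(2p')}\norm{g^\eps}_{L^{p'}},
\]
and the right-hand side tends to the desired bound. It therefore remains to show that the left-hand side, without absolute values, converges to the torus pairing.

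\textbf{Frequency-side identity (main step).} We compute on the Fourier side. With $\widehat{F_j}(\xi,n)$ the mixed Fourier coefficients, $\widehat{f_j^\eps}(\xi_j,\eta_j)=\sum_{n_j}\widehat{F_j}(\xi_j,n_j)\,G_{\eps,1/p_j}(\eta_j-n_j)$ (Gaussian of variance $\sim\eps$), and similarly for $g^\eps$. Plancherel in $\R^d$ turns the pairing into a finite sum over $(n_1,\dots,n_m,n_0)$ of
\[
\int\sigma(\boldsymbol\xi,\boldsymbol\eta)\prod_j\widehat{F_j}(\xi_j,n_j)\,G_{\eps,1/p_j}(\eta_j-n_j)\,\overline{\widehat G(\xi_{1\cdots m},n_0)}\,G_{\eps,1/p'}(\eta_{1\cdots m}-n_0)\,d\boldsymbol\xi\,d\boldsymbol\eta .
\]
Substituting $y_j=\eta_j-n_j$, the Gaussian weight becomes, up to the factor $\eps^{d_2/2}$, exactly $\Gamma_{\eps,x_2}(\boldsymbol y)$ from Lemma~\ref{lem:resonance} with $\al_j=1/p_j$, $\al_0=1/p'$ (these sum to $1$), and with $x_2=n_0-n_{1\cdots m}$.

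By \eqref{G2a}, when $n_0\ne n_{1\cdots m}$ the contribution is bounded by $\norm{\sigma}_\infty L_{1/\eps}(x_2)\times(\text{integrable in }\xi)$, which tends to $0$ since $|x_2|\ge1$. When $n_0=n_{1\cdots m}$, \eqref{G4} applied with $H(\boldsymbol y)=\sigma(\boldsymbol\xi,\boldsymbol n+\boldsymbol y)$, which is bounded and continuous, together with dominated convergence in $\boldsymbol\xi$, gives $\sigma(\boldsymbol\xi,\boldsymbol n)$. Summing, the limit is precisely $\int S_\s(F_1,\dots,F_m)\overline G$, expressed via Plancherel on $\R^{d_1}\times\T^{d_2}$.

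The main obstacle is this step: carefully justifying the exchange of limit, sums and $\xi$-integrals. Restricting to finitely many modes avoids summability issues, and the Schwartz decay of $\widehat{F_j}$ in $\xi$ provides dominating functions. Nevertheless, the exact normalization constants relating the Gaussian factors to $\Gamma_{\eps,x_2}$ must be checked, including the powers of $\eps$ coming from $L_{\eps/p_j}$ versus $G_{\eps,1/p_j}$.
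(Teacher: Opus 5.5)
Your proposal is correct and follows the paper's proof essentially step for step. It uses the same Gaussian-weighted test functions $F_jL_{\eps/p_j}$ and $F_0L_{\eps/p'}$ normalized via Lemma~\ref{lem:gaussian-average}, and the same Parseval reduction to integrals of $\sigma(\boldsymbol\xi,\boldsymbol n+\boldsymbol y)$ against $\Gamma_{\eps,x_2}$ from Lemma~\ref{lem:resonance} with $\alpha_0=1/p'$, $\alpha_j=1/p_j$. It also has the same resonant/non-resonant dichotomy with dominated convergence and the same H\"older--duality--density conclusion. The normalization you flag does check out: $\widehat{L_{\eps/p_j}}=G_{\eps,1/p_j}$, and the prefactor $\eps^{d_2/2}$ is exactly the one built into $\Gamma_{\eps,x_2}$. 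Your sign $x_2=n_0-n_{1\cdots m}$ is in fact the precise one, though only $|x_2|$ matters for the argument.
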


Before presenting the proof of Proposition~\ref{LEM:multitrans2}, we remark that the constant 1 in the bound~\eqref{eq:mainbound} is sharp. This is easily seen by considering the constant symbol $\sigma=1$, in which case both multilinear multiplier operators reduce to  pointwise multiplication, namely
\[
T_1(f_1,\ldots,f_m)=\prod_{j=1}^m f_j,
 \qquad
 S_1(F_1,\ldots,F_m)=\prod_{j=1}^m F_j.
\]
Using H\"older's inequality, we see that both operator norms in
\eqref{eq:mainbound} are exactly 1.

As the proof below shows, the hypothesis on the symbol $\sigma$ can be relaxed in the spirit of \cite[Theorem 3]{FS} to only require $\sigma\in L^\infty(\R^{md})$ and that,
for every
 $\boldsymbol n\in\Z^{md_2}$ and a.e. $\boldsymbol\xi\in\R^{md_1}$, the origin is a Lebesgue point of
\(\boldsymbol y\mapsto \sigma (\boldsymbol\xi, \boldsymbol y+\boldsymbol n).
\) Moreover,
the restriction $p > 1$
comes from the use of Lemma \ref{lem:resonance},
which requires $\al_0$ in \eqref{T4a} to be positive.

\begin{proof} [Proof of Proposition~\ref{LEM:multitrans2}]
 Let
\begin{align}
A=\norm{T_\sigma}_{L^{p_1}(\R^d)\times\cdots\times L^{p_m}(\R^d)\to L^p(\R^d)}, \quad \alpha_0=\frac{1}{p'},\quad \text{and}\quad \alpha_j=\frac{1}{p_j}
\label{T4a}
\end{align}

\noi
for $j = 1, \dots, m$.
Then, \eqref{T3} reduces to
\begin{equation}
 \alpha_0+\alpha_1+\cdots+\alpha_m=1.
 \label{T4}
\end{equation}
Next, we define a dense class of elementary functions on $\mathbb R^{d_1}\times\T^{d_2}$ as follows;
 we say that $F$ is an elementary function (defined on $\mathbb R^{d_1}\times\T^{d_2}$) if there exists
a finite set $\Ld \subset \Z^{d_2}$  and a collection  $\{f_n\}_{n\in\Ld } \subset \S(\R^{d_1})$ such that
\begin{equation*}
 F(x_1, x_2)=\sum_{n\in \Ld }f_n(x_1)e^{2\pi i n\cdot x_2},
 \quad (x_1, x_2) \in\R^{d_1}\times \T^{d_2}.
\end{equation*}
We denote by $\mathcal E$ the collection of these elementary functions, namely
\begin{align}
\mathcal E
=\bigg\{F=\sum_{n\in\Lambda}f_n\otimes e_n^{(2)},\,\, \Lambda\subset\Z^{d_2}\,\,\text{finite},\,\,f_n\in\mathcal S(\R^{d_1})\bigg\},
\label{T5a}
\end{align}

\noi
where we wrote $e_n^{(2)}(x_2)=e^{2\pi in\cdot x_2},\, x_2\in\T^{d_2}.$  By the density of Schwartz functions and trigonometric polynomials, it follows that $\mathcal E$ is dense in $L^r(\R^{d_1}\times\T^{d_2})$, $0<r<\infty$.
In the following,  we often view a function on $\R^{d_1}\times\T^{d_2}$ as a function on $\R^{d_1}\times\R^{d_2}$ which is periodic in $x_2\in\R^{d_2}$.

For $j = 0, 1, \dots, m$,
let $F_j\in\mathcal E$.
Namely, there exists a finite set $\Ld_j \subset \Z^{d_2}$
 and $f_{j, n_j}\in\S(\R^{d_1})$ such that
\begin{equation}
 F_j=\sum_{n_j\in\Lambda_j}f_{j,n_j}\otimes e_{n_j}^{(2)}.
 \label{T6}
\end{equation}

\noi
Then,
for $\eps>0$, define the Schwartz functions $F_{j, \eps}$ on $\R^{d}$ by
\begin{align}
 F_{j,\eps}(x_1, x_2)=F_j(x_1, x_2)L_{\eps\alpha_j}(x_2),
\label{T6a}
\end{align}

\noi
where $L_{\eps\alpha_j}$ is as in \eqref{L1}.
Note that by Lemma~\ref{lem:gaussian-average}, we have
\begin{align}
\begin{split}
\lim_{\eps\to 0}
 \eps^{d_2/(2p')}
 \norm{F_{0, \eps}}_{L^{p'}(\R^{d})}
 &=\norm{F_0}_{L^{p'}(\R^{d_1}\times\T^{d_2})}, \\
\lim_{\eps\to 0}
 \eps^{d_2/(2p_j)}
 \norm{F_{j, \eps}}_{L^{p_j}(\R^{d})}
 &=\norm{F_j}_{L^{p_j}(\R^{d_1}\times\T^{d_2})}, \quad j = 1, \dots, m.
\end{split}
 \label{T7}
\end{align}

\medskip

\noindent
$\bul$~\textbf{Step 1:}
Given $\eps > 0$, let
\begin{align}
I_\eps=\eps^{d_2/2}\int_{\R^d}
 T_\sigma(F_{1,\eps},\ldots,F_{m,\eps})(x)
 \overline{F_{0, \eps}(x)}\, dx
\label{T7a}
\end{align}

\noi
for $F_{j, \eps}$, $j = 0, 1, \dots, m$,  as in \eqref{T6a}.
We claim that
\begin{equation}
 \lim_{\eps\to 0}I_\eps
 =\int_{\R^{d_1}\times\T^{d_2}}
 S_\sigma(F_1,\ldots,F_m)(x_1, x_2)\overline{F_0(x_1, x_2)}
 \,dx_1dx_2.
 \label{T8}
\end{equation}

From \eqref{T6a} and \eqref{G1}, we have
\begin{equation}
 \widehat F_{j,\eps}(\xi,\eta)
 =\sum_{n_j\in \Ld_j}\widehat f_{j,n_j}(\xi)G_{\eps, \alpha_j}(\eta-n_j).
\label{T9}
\end{equation}

\noi
For $\boldsymbol\xi \in \R^{md_1}$,   $\boldsymbol n = (n_1, \dots, n_m) \in \Z^{md_2}$,
and  $n_0 \in \Z^{d_2}$,
define  $J_\eps(\boldsymbol\xi,\boldsymbol n, n_0)$ by setting
\begin{equation}
\label{T10}
 J_\eps(\boldsymbol\xi,\boldsymbol n, n_0)=\int_{\R^{md_2}}
 \sigma(\boldsymbol\xi, \boldsymbol n+\boldsymbol y)
 \Gamma_{\eps,\,n_{1\cdots m}-n_0}(\boldsymbol y)
 \, d\boldsymbol y,
\end{equation}

\noi
where $ \Gamma_{\eps,\,n_{1\cdots m}-n_0}$ is as in \eqref{G2}
with the notation \eqref{sum1}.
Noting that
 the sums in \eqref{T6}, defining the elementary functions $F_j$ are finite,
 it follows from  Parseval's identity,  the definition \eqref{T1} of $T_\sigma$,
\eqref{T9},  \eqref{G1a},
 \eqref{T4}  and a change of variables $y_j=\eta_j-n_j$ with \eqref{T10}
 (see also \eqref{G2}) that
\begin{align}
 I_\eps
 &=\sum_{\substack{n_j\in \Ld_j\\ j = 0, 1, \dots, m}}
   \int_{\R^{md_1}}
   J_\eps(\boldsymbol\xi,\boldsymbol n, n_0)
\prod_{j=1}^m\widehat f_{j,n_j}(\xi_j)\cdot
    \overline{\widehat f_{0, n_0} (\xi_{1\cdots m})}
   \,d\boldsymbol\xi.
\label{T11}
\end{align}

By applying
 Lemma~\ref{lem:resonance} to \eqref{T10}, we are led to consider two cases.
In the non-resonant case: $n_0 \ne n_{1\cdots m},$ we have
\begin{align}
|J_\eps(\boldsymbol\xi,\boldsymbol n, n_0)|
 \leq\norm{\sigma}_{L^\infty(\R^{md})}L_{1/\eps}(n_{1\cdots m}- n_0)
 \too 0,
\label{T12}
\end{align}

\noi
as $\eps\to 0$.
In the resonant case: $n_0 = n_{1\cdots m} $,
it follows from the boundedness and continuity of $\sigma$ with \eqref{G4}
that
\begin{align}
\lim_{\eps\to 0}J_\eps(\boldsymbol\xi,\boldsymbol n, n_0)=
\sigma (\boldsymbol\xi, \boldsymbol n).
\label{T13}
\end{align}

\noi
Moreover, in both cases, we have \(|J_\eps|\leq\norm{\sigma}_{L^\infty(\R^{md})}\). For every fixed finite tuple of indices, the remaining function of $\boldsymbol\xi$ in \eqref{T11} is integrable as
\[\int_{\R^{md_1}}
 \prod_{j=1}^m |\widehat f_{j,n_j}(\xi_j)|
\cdot |\widehat f_{0, n_0}(\xi_{1\cdots m})|d\boldsymbol\xi
 \leq
 \norm{\widehat f_{0, n_0}}_{L^\infty(\R^{d_1})}
 \prod_{j=1}^m\norm{\widehat f_{j,n_j}}_{L^1(\R^{d_1})}<\infty.
\]

\noi
By applying the
 Lebesgue dominated convergence
 to $I_\eps$ in  \eqref{T11}
 with \eqref{T12} and \eqref{T13},
 we obtain
\begin{align*}
 \lim_{\eps\to 0}I_\eps =
 \sum_{\substack{n_j\in \Ld_j\\ j = 0, 1, \dots, m\\ n_0 = n_{1\cdots m}}}
    \int_{\R^{md_1}}
\sigma (\boldsymbol\xi, \boldsymbol n).
\prod_{j=1}^m\widehat f_{j,n_j}(\xi_j)\cdot
    \overline{\widehat f_{0, n_0} (\xi_{1\cdots m})}
   \,d\boldsymbol\xi.
\end{align*}

\noi
In view of \eqref{T2} and \eqref{T6}
with Parseval's identity,
this  is exactly
 the Fourier expansion of the right-hand side of~\eqref{T8}, proving our claim \eqref{T8}.

\medskip

\noindent
$\bul$~\textbf{Step 2:} We now prove the bound \eqref{eq:mainbound} via duality.
By applying
H\"older's inequality and  the boundedness of $T_\sigma$
to $I_\eps$ in
\eqref{T7a}
with  \(1/p'+\sum_{j=1}^m 1/p_j=1\), we have
\begin{equation}
\begin{split}
 |I_\eps|
&  \leq A\,\eps^{d_2/2}
 \norm{F_{0, \eps}}_{L^{p'}(\R^d)}
 \prod_{j=1}^m\norm{F_{j,\eps}}_{L^{p_j}(\R^d)}
\\
&  =A
 \left(\eps^{d_2/(2p')}
       \norm{F_{0, \eps}}_{L^{p'}(\R^d)}\right)
 \prod_{j=1}^m
 \left(\eps^{d_2/(2p_j)}
       \norm{F_{j,\eps}}_{L^{p_j}(\R^d)}\right),
\end{split}
 \label{eq:Ieps-bound}
\end{equation}

\noi
where $A$ is as in \eqref{T4a}.
Letting $\eps\to 0$ in \eqref{eq:Ieps-bound} together with \eqref{T8} and \eqref{T7},
we obtain
\begin{equation}
\label{eq:final}
\begin{split}
& \bigg|\int_{\R^{d_1}\times\T^{d_2}}
 S_\sigma(F_1,\ldots,F_m)(x_1, x_2)\overline{F_0(x_1, x_2)}\,dx_1dx_2\bigg|\\
& \hphantom{XXXXXXXXXX}
  \leq A
 \norm{F_0}_{L^{p'}(\R^{d_1}\times\T^{d_2})}
 \prod_{j=1}^m\norm{F_j}_{L^{p_j}(\R^{d_1}\times\T^{d_2})},
\end{split}
\end{equation}
for any elementary functions $F_j\in\mathcal E$,
$j = 0, 1, \dots, m$.
Therefore, the bound \eqref{eq:mainbound} follows from~\eqref{eq:final}
and the density of $\mathcal E$ in $L^{p_j}(\R^{d_1}\times\T^{d_2})$.
\end{proof}

\section{Multilinear transference: weak-type result}\label{SEC:Weak}

In this section, we establish
the following weak-type transference result on product spaces.

\begin{proposition}
\label{LEM:multitrans3}
Let $m\in\mathbb N$ and $d, d_1,d_2\in\N$ with $d=d_1+d_2$.
Given a bounded and continuous function $\sigma: \R^{md_1}\times\R^{md_2}\to\mathbb C$,
let $T_\s$ and $S_\s$ be as in \eqref{T1} and \eqref{T2}, respectively.
Suppose that $T_\s$
is bounded from
\(
 L^{p_1}(\R^d)\times\cdots\times L^{p_m}(\R^d)\) to $L^{p, \infty}(\R^d)$
for some
\(
 0<p, p_1,\ldots,p_m<\infty\), satisfying \eqref{T3}.
Then,  $S_\sigma$ in \eqref{T2} is bounded from
\(
 L^{p_1}(\R^{d_1}\times\T^{d_2})\times\cdots\times
 L^{p_m}(\R^{d_1}\times\T^{d_2})\) to \(
 L^{p, \infty}(\R^{d_1}\times\T^{d_2})\),
satisfying the bound\textup{:}
\begin{equation}
 \norm{S_\sigma}_{L^{p_1}\times\cdots\times L^{p_m}\to L^{p, \infty}}
 \leq
 \norm{T_\sigma}_{L^{p_1}\times\cdots\times L^{p_m}\to L^{p, \infty}}.
 \label{S1}
\end{equation}

\end{proposition}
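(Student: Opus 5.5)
Since duality is unavailable for $L^{p,\infty}$ with $p\le 1$, we replace the Step~2 argument of Proposition~\ref{LEM:multitrans2} by a pointwise convergence argument in the spirit of the weak-type de Leeuw theorem. We fix elementary functions $F_j\in\mathcal E$, $j=1,\dots,m$, as in \eqref{T6}, and set $F_{j,\eps}=F_j L_{\eps\al_j}$ with $\al_j=1/p_j$; now $\al_{1\cdots m}=1/p$ need not equal $1$. The plan has three steps.

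\emph{Step 1: pointwise convergence.} Since each $F_j$ is a finite sum, we compute directly
\[
T_\sigma(F_{1,\eps},\dots,F_{m,\eps})(x_1,x_2)=\sum_{\boldsymbol n}\int_{\R^{md_1}}\int_{\R^{md_2}}\sigma(\boldsymbol\xi,\boldsymbol n+\boldsymbol y)\prod_{j=1}^m \widehat f_{j,n_j}(\xi_j)G_{\eps,\al_j}(y_j)\,e^{2\pi i(x_1\cdot\xi_{1\cdots m}+x_2\cdot(n_{1\cdots m}+y_{1\cdots m}))}\,d\boldsymbol y\,d\boldsymbol\xi .
\]
The measures $\prod_j G_{\eps,\al_j}(y_j)\,d\boldsymbol y$ are probability measures concentrating at the origin as $\eps\to0$. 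Since $\sigma$ is bounded and continuous, the factor $e^{2\pi i x_2\cdot y_{1\cdots m}}$ is continuous, and $\widehat f_{j,n_j}\in L^1$, dominated convergence gives
\[
T_\sigma(F_{1,\eps},\dots,F_{m,\eps})(x)\too S_\sigma(F_1,\dots,F_m)(x)
\]
for every $x\in\R^{d_1}\times\R^{d_2}$, where the limit is the periodic extension in $x_2$. The convergence is locally uniform in $x$, because the only $x$-dependence is through the bounded phase $e^{2\pi i x_2\cdot y_{1\cdots m}}$, with error at most $\int|e^{2\pi i x_2\cdot y_{1\cdots m}}-1|\,d\mu_\eps\to0$ uniformly on compact sets.

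\emph{Step 2: reducing to the Gaussian-weighted function.} Set $G=S_\sigma(F_1,\dots,F_m)$ and fix $\ld>0$ and $\delta>0$. The key is to compare $G L_{\eps/p}$ with $T_\sigma(F_{1,\eps},\dots,F_{m,\eps})$, since the product of the weights is $\prod_j L_{\eps\al_j}=L_{\eps/p}$. We intend to show
\[
\eps^{d_2/2}\,\big|\{x\in\R^d:|G(x)|L_{\eps/p}(x_2)>\ld\}\big|\le \eps^{d_2/2}\,\big|\{|T_\sigma(F_{1,\eps},\dots)|>(1-\delta)\ld\}\big|+o(1).
\]
For this it suffices to show that $T_\sigma(F_{1,\eps},\dots)-GL_{\eps/p}$ is small in the sense that $\eps^{d_2/2}|\{|T_\sigma(F_{1,\eps},\dots)-GL_{\eps/p}|>\delta\ld\}|\to0$. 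Once this holds, the weak-type bound for $T_\sigma$ together with \eqref{T7} gives
\[
\limsup_{\eps\to0}\eps^{d_2/2}\big|\{|G|L_{\eps/p}>\ld\}\big|\le ((1-\delta)\ld)^{-p}A^p\prod_{j=1}^m\norm{F_j}_{L^{p_j}}^p .
\]

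\emph{Step 3: evaluating the left side.} By the layer-cake argument behind Lemma~\ref{lem:gaussian-average} (Poisson summation applied to the periodic function $x_2\mapsto|\{x_1:|G(x_1,x_2)|>\ld e^{\eps\pi|x_2|^2/p}\}|$, or more simply monotonicity and a change of scale), the left side tends to at least $|\{x\in\R^{d_1}\times\T^{d_2}:|G|>\ld\}|$. Concretely, on the region where $L_{\eps/p}\ge 1-\delta$ we have $|G|L_{\eps/p}>\ld$ whenever $|G|>\ld/(1-\delta)$. That region is a ball of radius $\sim\eps^{-1/2}$, which contains $\sim c_\delta\eps^{-d_2/2}$ periods, so a lower bound of the form $c'_\delta\,|\{|G|>\ld/(1-\delta)\}|$ follows with $c'_\delta\to1$ after the correct normalization. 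Letting $\delta\to0$ yields \eqref{S1} on $\mathcal E$.

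The main obstacle is extending from $\mathcal E$ to general $F_j\in L^{p_j}$. This does not follow immediately, because $L^{p,\infty}$ is only a quasi-normed space. The plan is to approximate $F_j$ by elements of $\mathcal E$ and use the multilinearity of $S_\sigma$. The difference $S_\sigma(F_1,\dots,F_m)-S_\sigma(F^{(k)}_1,\dots,F^{(k)}_m)$ telescopes into terms, each bounded in $L^{p,\infty}$ by a constant times a vanishing norm. This uses the already-established bound on $\mathcal E$, extended by continuity to give a well-defined limit, and then a Fatou-type property of the distribution function under a.e. convergence along a subsequence. Controlling the comparison error in Step 2 precisely with the $\eps$-dependent domain is the other delicate point.
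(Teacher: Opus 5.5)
\textbf{The gap is in Step~3, and the constant $1$ in \eqref{S1} is lost there.} Lemma~\ref{lem:gaussian-average} works because $\int|F|^rL_\eps$ is linear in the weight. In a level set, the weight instead shifts the threshold. Also, $x_2\mapsto|\{x_1:|G|>\ld e^{\eps\pi|x_2|^2/p}\}|$ is not periodic, so Poisson summation does not apply.

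Your own lower bound exposes the problem. The region $\{L_{\eps/p}\ge1-\dl\}$ is the ball $|x_2|^2\le\frac{p}{\pi\eps}\log\frac1{1-\dl}$, so
\[
\eps^{d_2/2}\bigl|\{x_2\in\R^{d_2}:L_{\eps/p}(x_2)\ge1-\dl\}\bigr|=\omega_{d_2}\Bigl(\frac p\pi\log\frac1{1-\dl}\Bigr)^{d_2/2}\too0\quad\text{as }\dl\to0,
\]
where $\omega_{d_2}$ is the volume of the unit ball. Thus $c'_\dl\to0$, not $1$, and you cannot renormalize, because the factor $\eps^{d_2/2}$ is forced by \eqref{T7} on the right-hand side.

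More precisely, set $\mu(s)=|\{x\in\R^{d_1}\times\T^{d_2}:|G(x)|>s\}|$. Rescaling $x_2=\eps^{-1/2}z$ shows that your left-hand side converges to the Gaussian average $\int_{\R^{d_2}}\mu(\ld e^{\pi|z|^2/p})\,dz$. This can be far smaller than $\mu(\ld)$, for instance when $|G|$ only barely exceeds $\ld$ on $\{|G|>\ld\}$.

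Optimizing your ball bound over $\dl$ gives only $\mu(s)\le C_{d_2}(M/s)^p$, with $M=\norm{T_\sigma}\prod_j\norm{F_j}_{L^{p_j}}$ and $C_{d_2}=\Gamma(\frac{d_2}2+1)(\frac{2e}{d_2})^{d_2/2}\ge\sqrt{\pi d_2}>1$. Nothing better can be extracted: the profile $\mu=\mu_0\mathbf 1_{[0,\ld_0)}$ with $\mu_0=C_{d_2}M^p\ld_0^{-p}$ satisfies all the averaged inequalities for every $\ld$, yet $\sup_s s^p\mu(s)=C_{d_2}M^p$. So your scheme proves boundedness of $S_\sigma$ with constant $C_{d_2}^{1/p}$, but not \eqref{S1}.

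\textbf{The missing idea is a weight that is essentially the indicator of a large set.} This is the paper's route, following Fan--Sato. Take $F_{j,N}=F_j\Psi_N$ with $\Psi_N(x_2)=\Psi(x_2/N)$, where $\Psi=1$ on $Q$ and $\operatorname{supp}\Psi\subset Q_K$. Then three things hold:
\begin{itemize}
\item The computation of your Step~1 gives $\norm{\Psi_N^mS_\sigma(F_1,\dots,F_m)-T_\sigma(F_{1,N},\dots,F_{m,N})}_{L^\infty(\R^d)}\to0$. The convergence is uniform because the difference involves only $\sigma(\boldsymbol\xi,\boldsymbol n)-\sigma(\boldsymbol\xi,\boldsymbol n+\boldsymbol u/N)$.
\item On $\R^{d_1}\times NQ$ the weight is exactly $1$. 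So the level set of $S_\sigma$ there, of measure $N^{d_2}|\O_\ld|$, lies inside a level set of $T_\sigma(F_{1,N},\dots,F_{m,N})$ at height $\ld$ minus the uniform error.
\item $\norm{F_{j,N}}_{L^{p_j}(\R^d)}\le N^{d_2/p_j}(1+2/K+2/N)^{d_2/p_j}\norm{F_j}_{L^{p_j}}$, and the excess factor disappears as $K\to\infty$.
\end{itemize}

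Your Step~2 is also unjustified as written, though easily repaired. Locally uniform convergence to $G$ says nothing on the relevant scale $|x_2|\sim\eps^{-1/2}$. Instead, write $L_{\eps\al_j}(x_2)=\int G_{\eps,\al_j}(y)e^{2\pi ix_2\cdot y}\,dy$; this gives $\norm{T_\sigma(F_{1,\eps},\dots,F_{m,\eps})-GL_{\eps/p}}_{L^\infty(\R^d)}\to0$ directly.

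Finally, the density step you single out as the main obstacle is routine by comparison; the real obstruction is Step~3.
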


\begin{proof}
In  the proof of Proposition~\ref{LEM:multitrans2},
 we  used Gaussian localization and the $(L^p, L^{p'})$-duality.
For a weak-type estimate,
 this duality is less natural,
 and thus  we adapt  the cutoff argument from the proof of  \cite[Theorem 4]{FS}.

Let
\begin{align}
B=\norm{T_\sigma}_{L^{p_1}(\R^d)\times\cdots\times L^{p_m}(\R^d)
 \to L^{p,\infty}(\R^d)}.
\label{S2}
\end{align}

\noi
Namely,
$B$ is the smallest constant such that
\begin{equation}
\label{eq:weakTbound}
 \bigl\lvert
 \{x\in\R^d:
 \lvert T_\sigma(f_1,\ldots, f_m)(x)\rvert>\lambda\}
 \bigr\rvert
 \leq
 \left(
 \frac{B\prod_{j=1}^m\norm{f_j}_{L^{p_j}(\R^d)}}{\lambda}
 \right)^p
\end{equation}

\noi
for any \(\lambda>0\).
As in the proof of Proposition~\ref{LEM:multitrans2}, it suffices to prove our estimates for the class $\mathcal E$ of elementary functions
defined in \eqref{T5a}.
In the following, we fix
  $F_j\in\mathcal E$,  $j = 1, \dots, m$,  as in \eqref{T6}.

Let $Q=[-\frac{1}{2}, \frac{1}{2}]^{d_2}$, $Q_K=
 \left[-\frac{1}{2}-\frac{1}{K},\frac{1}{2}+\frac{1}{K}\right]^{d_2}$ for a fixed \(K\in\N\), and choose \(\Psi\in C_c^\infty(\R^{d_2}; [0, 1])\) such that
\begin{equation*}
 \Psi=1 \text{ on } Q
 \qquad \text{and}\qquad
 \operatorname{supp}\Psi\subset Q_K.
\end{equation*}
Following \cite[(3.1)]{FS}, for $N\in\N$ define
the error term $ E_N (F_1,\ldots, F_m)$ by setting
\begin{equation}
\label{eq:EN}
\begin{split}
 E_N (F_1,\ldots, F_m)(x_1, x_2)
 & =\Psi_N(x_2)^m S_\sigma(F_1,\ldots, F_m)(x_1, x_2)\\
 & \quad -T_\sigma(F_{1,N},\ldots, F_{m,N})(x_1, x_2),
\end{split}
\end{equation}
where
\begin{align}
 \Psi_N(x_2)=\Psi(x_2/N)
 \quad
 \text{and}\quad
 F_{j,N}(x_1,x_2)=F_j(x_1,x_2)\Psi_N(x_2).
\label{S4}
\end{align}

\medskip

\noindent
$\bul$~\textbf{Step 1:} We first show  that the following uniform approximation holds:
\begin{equation}
\label{eq:ENuniform}
 \norm{E_N (F_1,\ldots, F_m)}_{L^\infty(\R^d)}\longrightarrow0
 \quad\text{as }N\to\infty.
\end{equation}

From \eqref{S4} with \eqref{T6}, we have
\[
 \widehat F_{j,N}(\xi_j,\eta_j)
 =
 \sum_{n_j\in\Lambda_j}
 \widehat f_{j, n_j}(\xi_j)\,
 N^{d_2}\widehat\Psi\bigl(N(\eta_j-n_j)\bigr)
\]

\noi
Then, by a change of variables  $u_j=N(\eta_j-n_j)$, we obtain
\begin{align}
\begin{split}
&T_\sigma(F_{1,N},\ldots,F_{m,N})(x_1,x_2)
\\
&\quad =
\sum_{\substack{n_j\in\Lambda_j\\  j = 1, \dots, m}}
\int_{\R^{md_1}}\int_{\R^{md_2}}
 \sigma\left(
 \boldsymbol\xi,
 \boldsymbol n+\boldsymbol u/N
 \right)
 \prod_{j=1}^m
 \widehat f_{j,n_j}(\xi_j)\widehat\Psi(u_j)
\\
&\hspace{2.8cm}\times
 e^{2\pi i x_1\cdot\sum_{j=1}^m\xi_j}
 e^{2\pi i x_2\cdot\sum_{j=1}^m n_j}
 e^{2\pi i (x_2/N)\cdot\sum_{j=1}^m u_j}
 \,d\boldsymbol u\,d\boldsymbol\xi ,
\end{split}
\label{S5}
\end{align}

\noi
where $\boldsymbol u = (u_1, \dots, u_m)$.
On the other hand, from \eqref{T2} with \eqref{S4},
 we have
\begin{align}
\begin{split}
&\Psi_N(x_2)^mS_\sigma(F_{1},\ldots,F_{m})(x_1,x_2)
\\
&=
\sum_{\substack{n_j\in\Lambda_j\\ j = 1, \dots, m}}
\int_{\R^{md_1}}\int_{\R^{md_2}}
 \sigma\left(
 \boldsymbol\xi,
 \boldsymbol n
 \right)
 \prod_{j=1}^m
 \widehat f_{j,n_j}(\xi_j)\widehat\Psi(u_j)
\\
&\hspace{2.8cm}\times
 e^{2\pi i x_1\cdot\sum_{j=1}^m\xi_j}
 e^{2\pi i x_2\cdot\sum_{j=1}^m n_j}
 e^{2\pi i (x_2/N)\cdot\sum_{j=1}^m u_j}
 \,d\boldsymbol u\,d\boldsymbol\xi.
\\
\end{split}
\label{S6}
\end{align}

\noi
Thus, from \eqref{eq:EN}, \eqref{S5}, and \eqref{S6}, we have
\begin{align}
\begin{split}
&  \lvert E_N(F_1,\ldots, F_m)(x_1,x_2)\rvert\\
 &\ \  \le
 \sum_{\substack{n_j\in\Lambda_j\\ j = 1, \dots, m}}
 \int_{\R^{md_1}}\int_{\R^{md_2}}
 \left|
 \sigma(\boldsymbol\xi,\boldsymbol n)
 -
 \sigma\left(
 \boldsymbol\xi,\boldsymbol n+\boldsymbol u/N
 \right)
 \right|
 \prod_{j=1}^m
|\widehat f_{j,n_j}(\xi_j)|
|\widehat\Psi(u_j)|
 \,d\boldsymbol u\,d\boldsymbol\xi.
\end{split}
\label{S7}
\end{align}

For every fixed \((\boldsymbol\xi,\boldsymbol n,\boldsymbol u)\), the continuity
of \(\sigma\) implies that
\begin{align}
2\norm{\sigma}_{L^\infty(\R^{md})}\geq \left|
 \sigma(\boldsymbol\xi,\boldsymbol n)
 -
 \sigma\left(
 \boldsymbol\xi,\boldsymbol n+\boldsymbol u/N
 \right)
 \right|\too 0,
\label{S8}
\end{align}

\noi
as $N \to \infty$.
By noting that
the sums over the \(n_j\)'s in \eqref{S8} are finite
and recalling that
\(\widehat f_{j,n_j}\in L^1 \left(\R^{d_1}\right)\), and
\(\widehat\Psi\in L^1\left(\R^{d_2}\right)\),
we see that
the right-hand side of
\eqref{S7} is bounded by an integrable function independent of
$N$, $x_1$, and $x_2$.  Therefore,
the convergence
\eqref{eq:ENuniform}
follows from \eqref{S8} and the Lebesgue dominated convergence theorem.

\medskip

\noindent
$\bul$~\textbf{Step 2:}
Since  \(\Psi_N=1\) on \(NQ\), the relation \eqref{eq:EN} simplifies to
\begin{align}
S_\sigma(F_1,\ldots,F_m)(x_1, x_2)
 =
 T_\sigma(F_{1,N},\ldots,F_{m,N})(x_1, x_2)+E_N(F_1,\ldots, F_m)(x_1, x_2)
\label{S9}
\end{align}

\noi
for $x_1\in\R^{d_1}$ and $x_2\in NQ$.

Fix $\lambda>0$
and define $\O_\ld$ by
\begin{align}
\Omega_\lambda=
 \big\{
 (x_1, x_2)\in\R^{d_1}\times Q:
 \lvert S_\sigma(F_1,\ldots,F_m)(x_1,x_2)\rvert>\lambda
 \big\}.
\label{S9a}
\end{align}

\noi
Since $S_\sigma(F_1,\ldots,F_m)$ is periodic in the second variable,
we have
\begin{align}
\lvert \Omega_\lambda\rvert=N^{-d_2}\bigl\lvert\bigl\{(x_1,x_2)\in\R^{d_1}\times NQ:\lvert S_\sigma(F_1,\ldots,F_m)(x_1,x_2)\rvert>\lambda\bigr\}\bigr\rvert.
\label{S10}
\end{align}

Given small  $\varepsilon\in (0, \lambda)$,
it follows from \eqref{eq:ENuniform} that
there exists $N_\eps \in \N$ such that
\begin{align}
 \norm{E_{N_\eps}(F_1,\ldots, F_m)}_{L^\infty(\R^d)}<\varepsilon
 \label{S11}
\end{align}

\noi
for  any $N \ge N_\eps$.
Hence, from \eqref{S9} and \eqref{S11}
 we have
\begin{align}
\begin{split}
&\big\{
 (x_1,x_2)\in\R^{d_1}\times NQ:
 \lvert S_\sigma(F_1,\ldots,F_m)(x_1,x_2)\rvert>\lambda
 \big\}
\\
&\quad\subset
\big\{
 x\in\R^d:
 \lvert T_\sigma(F_{1,N},\ldots,F_{m,N})(x)\rvert
 >\lambda-\varepsilon
 \big\}
 \end{split}
\label{eq:inclusion}
\end{align}

\noi
for any $N \ge N_\eps$.

Therefore,
from  \eqref{S10}, \eqref{eq:inclusion},
and  \eqref{eq:weakTbound}, we obtain
\begin{equation}
\label{eq:norms-1stesti}
 \lvert\Omega_\lambda\rvert
 \le
 N^{-d_2}
 \left(
 \frac{
 B\prod_{j=1}^m\norm{F_{j,N}}_{L^{p_j}(\R^d)}
 }{\lambda-\varepsilon}
 \right)^p
\end{equation}
for any $\varepsilon\in (0, \lambda)$ and $N \ge N_\eps$.

\medskip

\noindent
$\bul$~\textbf{Step 3:}
We now have a closer look at  the right-hand side of \eqref{eq:norms-1stesti}.
Recalling that
 $0\le\Psi_N\le1$ and \(\operatorname{supp}\Psi_N\subset NQ_K\),
 it follows from \eqref{S4} and the periodicity of $F_j$ in the second variable that
\begin{align*}
 \norm{F_{j,N}}_{L^{p_j}(\R^d)}^{p_j}&\leq
 \int_{\R^{d_1}}\int_{NQ_K}
 \lvert F_j(x_1,x_2)\rvert^{p_j}\,dx_2\,dx_1\\
 &\leq N^{d_2}\bigl(1+2/K+2/N\bigr)^{d_2}\int_{\R^{d_1}}\int_{Q}
 \lvert F_j(x_1,x_2)\rvert^{p_j}\,dx_2\,dx_1.
\end{align*}

\noi
Consequently, we have
\begin{equation}
\label{eq:cutoff-limsup}
 \limsup_{N\to\infty}
 N^{-d_2/p_j}
 \norm{F_{j,N}}_{L^{p_j}(\R^d)}
 \leq
 \left(1+\frac{2}{K}\right)^{d_2/p_j}
 \norm{F_j}_{L^{p_j}(\R^{d_1}\times\T^{d_2})}.
\end{equation}
Taking the product in \eqref{eq:cutoff-limsup} and using \eqref{T3},
we obtain
\begin{equation}
\label{eq:product-limsup}
 \limsup_{N\to\infty}
 N^{-d_2/p}
 \prod_{j=1}^m\norm{F_{j,N}}_{L^{p_j}(\R^d)}
 \leq
 \left(1+\frac{2}{K}\right)^{d_2/p}
 \prod_{j=1}^m
 \norm{F_j}_{L^{p_j}(\R^{d_1}\times\T^{d_2})}.
\end{equation}

\noi
Hence,
from \eqref{eq:norms-1stesti} and \eqref{eq:product-limsup}, we obtain
\begin{equation}
 \lvert\Omega_\lambda\rvert
 \leq
 \left(1+\frac{2}{K}\right)^{d_2}
 \left(\frac{B\prod_{j=1}^m
 \norm{F_j}_{L^{p_j}(\R^{d_1}\times\T^{d_2})}
 }{\lambda-\varepsilon}\right)^p.
\label{S12}
\end{equation}
Finally, by taking $K\to\infty$ and $\varepsilon\to 0$ in \eqref{S12},
we obtain
\begin{align}
\lvert\Omega_\lambda\rvert\leq\left(\frac{B\prod_{j=1}^m
 \norm{F_j}_{L^{p_j}(\R^{d_1}\times\T^{d_2})}}{\lambda}\right)^p.
\label{S13}
\end{align}

\noi
Therefore, the bound
\eqref{S1}
follows from
\eqref{S9a}, \eqref{S13}, and \eqref{S2}
with the density argument.
\end{proof}

\section{Applications}\label{SEC:Applications}

Recall the notation from Section~\ref{SEC:Multilinear}, in particular, the definitions of the classical
$m$-linear multiplier operator $T_\sigma$ in \eqref{T1} and of
the $m$-linear multiplier operator $S_\sigma$ on a product space defined in \eqref{T2}. A classical result due to Coifman and Meyer (\cite[Proposition~2 on  p.\,154]{CM2}; see also \cite[Proposition 6]{GT02}) states that if $\sigma\in C^\infty(\R^{md})$ and
\begin{equation}\label{eq:CM-multi}
|\partial_{\boldsymbol\xi}^{\boldsymbol\beta}\partial_{\boldsymbol\eta}^{\boldsymbol\gamma}\sigma
(\boldsymbol\xi, \boldsymbol\eta)|\lesssim
\big(1+ |\boldsymbol\xi|+|\boldsymbol\eta|\big)^{-|\boldsymbol\beta|-|\boldsymbol\gamma|}
\end{equation}
for all $\boldsymbol\xi\in\R^{md_1}, \boldsymbol\eta\in\R^{md_2}$ and all multi-indices
$\boldsymbol\beta\in\N_0^{md_1}$ and $ \boldsymbol\gamma\in\N_0^{md_2}$, then the $m$-linear multiplier operator $T_\sigma$ is bounded from  $L^{p_1}(\R^d)\times\cdots\times L^{p_m}(\R^d)$
to $ L^p(\R^d)$
for any  $1<p_1,\ldots, p_m<\infty$ satisfying  \eqref{T3}. 
Moreover, if at least one of the $p_j$'s equals $1$, then $T_\sigma$
is bounded from  $L^{p_1}(\R^d)\times\cdots\times L^{p_m}(\R^d)$
to the weak space  $L^{p, \infty}(\R^d)$; in particular,
we have $T_\sigma: L^{1}(\R^d)\times\cdots\times L^{1}(\R^d)\to L^{1/m, \infty}(\R^d)$.

Consequently, by using Propositions~\ref{LEM:multitrans2} and~\ref{LEM:multitrans3}, we obtain that for any multiplier $\sigma\in C^\infty(\R^{md})$, satisfying \eqref{eq:CM-multi}, and $1<p_1, \ldots, p_m<\infty$, satisfying \eqref{T3},   we have
\begin{align}
S_\sigma: L^{p_1}(\R^{d_1}\times\T^{d_2})\times\cdots\times
 L^{p_m}(\R^{d_1}\times\T^{d_2})\to
 L^p(\R^{d_1}\times\T^{d_2}),
\label{X0a}
\end{align}

\noi
while if at least one of the $p_j$'s equals $1$, then we have
\begin{align}
S_\sigma: L^{p_1}(\R^{d_1}\times\T^{d_2})\times\cdots\times
 L^{p_m}(\R^{d_1}\times\T^{d_2})\to
 L^{p, \infty}(\R^{d_1}\times\T^{d_2}).
\label{X0b}
\end{align}

\subsection{Multilinear Riesz-type and tensor-like operators}\label{SUBSEC:bilin}
In the simplest case $d_1=d_2=1$, let
$\langle (\boldsymbol\xi, \boldsymbol\eta)\rangle=\big(1+4\pi^2 (|\boldsymbol\xi|^2+|\boldsymbol\eta|^2)\big)^{1/2}$ denote the symbol of the Bessel potential operator $(\Id-\Delta)^{1/2}.$  It is easy to see that
the multipliers
\begin{align}
\sigma_{1, j}(\boldsymbol\xi, \boldsymbol\eta)=
\frac{\xi_j}{\langle (\boldsymbol\xi, \boldsymbol\eta)\rangle}
\qquad\text{and}\qquad
\sigma_{2, j}(\boldsymbol\xi, \boldsymbol\eta)
=\frac{\eta_j}{\langle (\boldsymbol\xi, \boldsymbol\eta)\rangle}
\label{X1}
\end{align}

\noi
both
satisfy \eqref{eq:CM-multi}, $j = 1, \dots m$.

 We now give a semigroup interpretation to the
 multilinear Riesz-type  operators $S_{\sigma_{k, j}}$, $k = 1, 2$ and $j = 1, \dots, m$,
 where $S_\s$ is as in \eqref{T2}.
Recalling that $\G(1/2) = \sqrt \pi$, we have
\[
 A^{-1/2}=\frac1{\sqrt{\pi}}\int_0^\infty t^{-1/2}e^{-tA}\,dt,
 \qquad A>0.
\]

\noi
Applying this identity with
 \(A=\langle(\boldsymbol\xi,\boldsymbol n)\rangle^2\) yields
\begin{equation}
 \frac1{\langle(\boldsymbol\xi,\boldsymbol n)\rangle}=
 \frac1{\sqrt{\pi}}\int_0^\infty t^{-1/2}e^{-t}\prod_{j=1}^m
 e^{-4\pi^2 t(\xi_j^2+n_j^2)}
 \,dt.
 \label{X2}
\end{equation}

\noi
Let \(P_t\)  be the heat semigroup on \(\R\times\T\), given by
\[
 \widehat{P_t F}(\xi,n)=e^{-4\pi^2  t(\xi^2+n^2)}\widehat F(\xi,n), \quad (\xi,n)\in\R\times\Z.
\]

\noi
Also, for $k = 1, 2$,  set
$D_k=1/(2\pi i)\partial_{x_k}$.
Then,  a routine calculation
with \eqref{T2}, \eqref{X1}, and \eqref{X2}
shows that
\begin{align*}
& S_{\sigma_{k, j}}(F_1,\ldots,F_m)(x_1, x_2)\\
& \quad=
\frac1{\sqrt{\pi}}\int_0^\infty t^{-1/2}e^{-t}\bigl(D_k P_t F_j\bigr)(x_1,x_2)
\prod_{\substack{\l=1\\ \l\neq j}}^m
\bigl(P_t F_\l\bigr)(x_1,x_2)
\,dt
\end{align*}

\noi
Equivalently,
denoting by \(K_t\)  the convolution kernel of \(P_t\) on
\(\R\times\T\), the preceding formula can be rewritten as
\begin{equation}
\label{eq:mixed-heat}
S_{\sigma_{k, j}}(F_1,\ldots,F_m)=
\frac1{\sqrt{\pi}}\int_0^\infty t^{-1/2}e^{-t} \bigl(D_k K_t*F_j\bigr)
\prod_{\substack{\l=1\\ \l\neq j}}^m
\bigl(K_t*F_\l\bigr)
\,dt,
\end{equation}

\noi
providing the heat-semigroup representation of the multilinear Riesz-type transforms
 $S_{\sigma_{k, j}}$, $k = 1, 2$ and $j = 1, \dots, m$.
This representation is reminiscent of the classical heat-semigroup representation of the Riesz transforms
$R_j$:
\[R_j f = \frac 1 {\sqrt{\pi}} \int_0^\infty t^{-1/2}\dd_j e^{t\Dl} f \, dt, \]

\noi
which follows from the Laplace-transform representation of $(-\Delta)^{-1/2}$.
%
As a consequence of Propositions~\ref{LEM:multitrans2} and~\ref{LEM:multitrans3},
these Riesz-type transforms $S_{\s_{k, j}}$ given in \eqref{eq:mixed-heat} are bounded from $L^{p_1}(\R\times\T)\times\cdots\times
 L^{p_m}(\R\times\T)$ to $L^p(\R\times\T)$ for $1<p_\l<\infty$, satisfying~\eqref{T3},
and are also bounded from
$L^{p_1}(\R \times\T)\times\cdots\times
 L^{p_m}(\R\times\T)$ to $L^{p, \infty}(\R\times\T)$ if one of the $p_\l$'s equals $1$.

A recurrent method of obtaining estimates in PDEs is
to truncate a given multiplier at appropriate scales; see \cite[Example 6]{BO2013}.
For simplicity,
we restrict our attention to the linear case ($m=1$)
and  $d_1=d_2=1$.
Given
 smooth cut-off functions
 $\phi_1, \phi_2$,
 supported on annuli away from the origin,  and coefficients $\{c_{a, b}\}_{a, b\in \Z}\in\C$,
satisfying
$\sup_{a,b\in\Z}|c_{a,b}|<\infty$,
 consider
 the multiplier
\begin{align}
 \s(\xi, n) = \sum_{a, b\in\Z}c_{a, b}\, \phi_1(2^{-a}\xi)\phi_2(2^{-b}n).
\label{X3}
\end{align}

\noi
Note that, for fixed $\xi, n$, the summation in $a, b$ is finite.
The multiplier $\s$ in \eqref{X3} is generally not a Coifman--Meyer multiplier in the isotropic sense of
\eqref{eq:CM-multi}, since the two frequency variables may occur at very different scales. Instead, it is naturally a multiplier of Marcinkiewicz type. Indeed, by the support properties of \(\phi_1\) and \(\phi_2\), for any $k, \l \in \N_0$,  we have
\begin{align*}
    \big|
    \partial_\xi^k\partial_\eta^\ell\sigma(\xi,\eta)
    \big|
    \lesssim_{k,\ell}
    \|\{c_{a,b}\}\|_{\ell^\infty_{a,b}}
    |\xi|^{-k}|\eta|^{-\ell},
    \qquad \xi\eta\ne0.
\end{align*}

\noi
 Thus, the classical Marcinkiewicz multiplier theorem on \(\mathbb R^2\)
(\cite[Theorem 6.2.4 and Corollary 6.2.5]{Gra1}) yields
\begin{align}
\|T_\sigma f\|_{L^p(\mathbb R^2)}
\lesssim_p
\|\{c_{a,b}\}\|_{\ell^\infty_{a, b}}
\|f\|_{L^p(\mathbb R^2)},
\qquad 1<p<\infty.
\label{X3a}
\end{align}

We may now apply Proposition~\ref{LEM:multitrans2} to transfer this Euclidean estimate to the product space setting. More precisely, let \(S_\sigma\) be the multiplier operator on \(\mathbb R\times\mathbb T\) given by

$$
S_\sigma(F)(x_1,x_2)
=
\sum_{n\in\mathbb Z}
\int_{\mathbb R}
\left[
\sum_{a,b\in\mathbb Z}
c_{a,b}\phi_1(2^{-a}\xi)\phi_2(2^{-b}n)
\right]
\widehat F(\xi,n)
e^{2\pi i x_1\xi}e^{2\pi i x_2n}\,d\xi .
$$

\noi
Given \(n\in\mathbb Z\), set
$$
\sigma_n(\xi)=\sigma(\xi,n)
\qquad\text{and}\qquad
F_n(x_1)
=
\int_{\mathbb T}
F(x_1,x_2)e^{-2\pi inx_2}\,dx_2.
$$

\noi
Then \(\widehat F_n (\xi)=\widehat F(\xi,n)\), and
$$
S_\sigma(F)
=
\sum_{n\in\mathbb Z}
T_{\sigma_n}(F_n)\otimes e_n^{(2)},
$$
where \(T_{\sigma_n}\) denotes the usual Fourier multiplier on \(\mathbb R\) with symbol \(\sigma_n\).

Consequently, Proposition~\ref{LEM:multitrans2} and the Marcinkiewicz multiplier theorem give
$$
\|S_\sigma F\|_{L^p(\mathbb R\times\mathbb T)}
\le
\|T_\sigma\|_{L^p(\mathbb R^2)\to L^p(\mathbb R^2)}
\|F\|_{L^p(\mathbb R\times\mathbb T)}
\lesssim_p
\|\{c_{a,b}\}\|_{\ell^\infty}
\|F\|_{L^p(\mathbb R\times\mathbb T)}
$$

\noi
for every \(1<p<\infty\). This example illustrates that the transference principle of Proposition~\ref{LEM:multitrans2} is not restricted to Coifman--Meyer multipliers;
 Euclidean multiplier estimates arising from genuinely product-type frequency decompositions can also be transferred directly to product spaces.

\subsection{Multipliers from PDEs}\label{SUBSEC:cone}
One natural setting where product spaces
arise is that of an evolution equation posed on a product spatial domain $\R^{d_1} \times \T^{d_2}$,
where the Coifman--Meyer boundedness results \eqref{X0a} and \eqref{X0b}
may be of use.
Another example appears
in the study of an evolution equation posed on a periodic spatial domain;
 the time variable is Euclidean, whereas the spatial variables are periodic.  We provide here two simple illustrations: the propagator of the damped Klein-Gordon equation and a smooth localization to the characteristic cone of the wave operator. These examples are inspired by the recent work~\cite{Zine}, where the stochastic damped sine--Gordon equation was studied on \(\R_+\times\T^2\), and the deterministic analysis involves cone multipliers
and the hyperbolic Riesz potential.

The damped Klein--Gordon propagator is given by
\[
\mathcal D(t)
=
e^{-t/2}
\frac{\sin\bigl(t[\![\nabla]\!]\bigr)}
     {[\![\nabla]\!]},
\qquad
[\![n]\!]
=
\left(\frac{3}{4}+4\pi^2 |n|^2\right)^{1/2};
\]
see \cite[(1.23)]{Zine}. Thus, its multiplier on $\R\times\R $ is given by
\[
\sigma_t^{\text{KG}}(\xi,\eta)
=
te^{-t/2}\text{sinc}\left(
t\sqrt{\frac{3}{4}+4\pi^2(\xi^2+\eta^2)}
\right),
\]

\noi
where $\text{sinc}(x) = \sin x /x$.
The corresponding mixed multiplier operator given by
\[
S_{\sigma_t}^{\text{KG}}F(x_1, x_2)=
\sum_{n\in\mathbb Z}\int_{\mathbb R}
\sigma_t^{\text{KG}}(\xi,n)\widehat F(\xi, n)e^{2\pi i(x_1\xi+x_2 n)}
\,d\xi
\]
is the  propagator associated with the damped Klein--Gordon equation
\[
\partial_t^2 u+\partial_t u+
(\text{Id}-\Delta_{\mathbb R\times\mathbb T})u=0
\]
on the product space $\mathbb R\times\mathbb T$.
We note that, with the notation from Section~\ref{SEC:Multilinear}, the two-dimensional
damped Klein-Gordon propagator has an integrable physical-space kernel on $\R^2$.
Thus, from  Proposition~\ref{LEM:multitrans1},
 we obtain the boundedness of $S_{\sigma_t}^\text{KG}$ on all $L^p(\R\times\T)$, $1\leq p<\infty$.

Our next illustration concerns a smooth cone-frequency localization for the wave
equation on \(\R\times\T^2\). The d'Alembertian
\(\Box=\partial_t^2-\Delta_x\)
has symbol
\(-4\pi^2(\tau^2-|n|^2),\)
so its characteristic set is given by
\[
 \{(\tau, n)\in\R\times\Z^2:\tau^2=|n|^2\}.
\]
It is thus natural in this context to consider first the \emph{sharp} cone
multiplier:
\begin{equation}
\label{eq:sharp-cone}
 \widehat{\mathcal C f}(\tau,\xi)
 =
 \mathbf 1_{\{|\tau|>|\xi|\}}
 \widehat f(\tau, \xi).
\end{equation}
However, the sharp multiplier $\mathcal C$ defined in \eqref{eq:sharp-cone} is not bounded on
\(L^p(\R\times\R^2)\) for \(1<p<\infty\), \(p\neq2\),
as a consequence of Fefferman's ball multiplier theorem
\cite{Fefferman}; see also \cite{Mock, LV}.

Below we introduce a smooth cone localization which is equally natural in the analysis of the regions close to the characteristic set
\(\{(\tau, n)\in\R\times\Z^2:|\tau|=|n|\}.\) More specifically, let \(\psi\in C_c^\infty(\R; [0, 1])\) satisfy
\[
 \psi(s)=1\  \ \text{for }s\in [-1, 1]
 \qquad\text{and}\qquad
 \operatorname{supp}\psi\subset [-2,2].
\]

\noi
Given  \(0<\delta<1\), define the symbol $\s_\dl$ by
\begin{equation}
\label{eq:smooth-cone-symbol}
\sigma_\delta(\tau,\xi)=\psi(\delta^{-1}r(\tau, \xi)),\qquad r(\tau, \xi)=
 \frac{\tau^2-|\xi|^2}{1+\tau^2+|\xi|^2}.
\end{equation}

\noi
for $(\tau, \xi)\in\R\times\R^2$.
On the support of \(\sigma_\delta\), we have
\(|\tau^2-|\xi|^2|\lesssim \delta(1+\tau^2+|\xi|^2)
\)
while  \(1\) in the denominator of \eqref{eq:smooth-cone-symbol} regularizes the symbol near the origin.
We define the smooth cone-frequency localization for the wave equation on \(\R\times\T^2\)
 by the following multiplier operator:
\begin{equation}
\label{eq:mixed-cone-operator}
 S_{\sigma_\delta}F(t, x)=\sum_{n\in\Z^2}\int_\R
 \sigma_\delta(\tau, n)\widehat F(\tau, n)
 e^{2\pi i(t\tau+x\cdot n)}\,d\tau.
\end{equation}

 It is easy to verify that for every multi-index
\(\alpha=(\alpha_0,\alpha_1,\alpha_2)\in\mathbb N_0^3\),
we have
\begin{equation*}
 \big|
 \partial_\tau^{\alpha_0}
 \partial_\xi^{(\alpha_1,\alpha_2)}
 r(\tau,\xi)
 \big|
 \leq
 C_\alpha
 (1+|\tau|+|\xi|)^{-|\alpha|}
\end{equation*}

\noi
Namely,  $\sigma_\delta$ satisfies \eqref{eq:CM-multi}.
The Euclidean \(L^p\) and weak-\(L^1\) estimates for the
smooth cone-frequency localization on $\R\times \R^2$  then transfer  to the physically
natural space-time domain \(\R\times\T^2\).
Namely,  Proposition~\ref{LEM:multitrans2}
and~\ref{LEM:multitrans3}
yield
  the following bound for the operator $S_{\s_\dl}$ in~\eqref{eq:mixed-cone-operator}:
\begin{align*}
\norm{S_{\sigma_\delta}F}_{L^p(\R\times\T^2)}
&  \leq
 \norm{T_{\sigma_\delta}}_{L^p(\R^3)\to L^p(\R^3)}
 \norm{F}_{L^p(\R\times\T^2)},
 \qquad 1<p<\infty,\\
 \norm{S_{\sigma_\delta}F}_{L^{1,\infty}(\R\times\T^2)}
&  \leq
 \norm{T_{\sigma_\delta}}_{L^1(\R^3)\to L^{1,\infty}(\R^3)}
 \norm{F}_{L^1(\R\times\T^2)}.
\end{align*}

\medskip

\begin{ackno}\rm
T.O.~was supported by the European Research Council (grant no.~864138 ``SingStochDispDyn")
and also
acknowledges support from
the NSFC (grant no.~W2531005).
\end{ackno}

\end{document}